\DeclareMathOperator{\tr}{tr}
\DeclareMathOperator{\Vol}{Vol}
\DeclareMathOperator{\inj}{inj}
\DeclareMathOperator{\Ric}{Ric}
\DeclareMathOperator{\biRic}{biRic}
\DeclareMathOperator{\R}{R}
\DeclareMathOperator{\Ad}{Ad}
\DeclareMathOperator{\vspan}{span}
\DeclareMathOperator{\Lip}{Lip}
\newcommand{\oB}{\overline{B}}
\newcommand{\oE}{\overline{E}}
\newcommand{\oG}{\overline{G}}
\newcommand{\oH}{\overline{H}}
\newcommand{\oM}{\overline{M}}
\newcommand{\oP}{\overline{P}}
\newcommand{\ove}{\overline{e}}
\newcommand{\cg}{\widetilde{g}}
\newcommand{\cE}{\widetilde{E}}
\newcommand{\lp}{\langle}
\newcommand{\rp}{\rangle}
\newcommand{\mA}{\mathcal{A}}
\newcommand{\mH}{\mathcal{H}}
\newcommand{\kg}{\mathfrak{g}}
\newcommand{\kh}{\mathfrak{h}}
\newcommand{\kp}{\mathfrak{p}}
\newcommand{\bN}{\mathbb{N}}
\newcommand{\bR}{\mathbb{R}}
\newcommand{\bS}{\mathbb{S}}
\def\sideremark#1{\ifvmode\leavevmode\fi\vadjust{\vbox to0pt{\vss
 \hbox to 0pt{\hskip\hsize\hskip1em
 \vbox{\hsize3cm\tiny\raggedright\pretolerance10000
 \noindent #1\hfill}\hss}\vbox to8pt{\vfil}\vss}}}
\newtheorem{theorem}{Theorem}[section]
\newtheorem{proposition}[theorem]{Proposition}
\newtheorem{lemma}[theorem]{Lemma}
\theoremstyle{definition}
\newtheorem{definition}[theorem]{Definition}
\theoremstyle{remark}
\newtheorem{remark}[theorem]{Remark}
\numberwithin{equation}{section}
\begin{document}

\title{Rigidity of CMC hypersurfaces in 5-and 6-manifolds}

\author{Han Hong}
\address{Department of Mathematics and statistics \\ Beijing Jiaotong University \\ Beijing \\ China, 100044}
\email{hanhong@bjtu.edu.cn}

\author{Zetian Yan}
\address{Department of Mathematics \\ UC Santa Barbara \\ Santa Barbara \\ CA 93106 \\ USA}
\email{ztyan@ucsb.edu}
\keywords{CMC hypersurfaces, Rigidity} 
\subjclass[2020]{Primary 53A10}
\begin{abstract}
 We prove that nonnegative $3$-intermediate Ricci curvature combined with uniformly positive $k$-triRic curvature implies rigidity of complete noncompact two-sided stable minimal hypersurfaces in a Riemannian manifold $(X^5,g)$ with bounded geometry. The stonger assumption of nonnegative $3$-intermediate Ricci curvature can be replaced by the nonnegativity of Ricci and biRic curvature. In particular, there is no complete noncompact stable minimal hypersurface in a closed $5$-dimensional manifold with positive sectional curvature. This extends result of Chodosh-Li-Stryker [J. Eur. Math. Soc (2025)] to $5$-dimension. We also establish rigidity results on CMC hypersurfaces with nonzero mean curvature in $5$- and $6$-manifolds.
\end{abstract}
\maketitle

\section{Introduction}
An immersed complete two-sided  stable minimal hypersurface $M^n$ in orientable manifold $(X^{n+1},g)$ has zero mean curvature and satisfies the inequality
\begin{equation}\label{stableinequality}
    \int_M \left(|A_M|^2+\Ric^X(\eta,\eta)\right)\psi^2\leq \int_M |\nabla \psi|^2
\end{equation}
for any $\psi\in C^{\infty}_c(M)$, where $A_M$ is the second fundamental form of the immersion, $\eta$ is a unit normal vector on $M$ and $\Ric^X$ is the ambient Ricci curvature tensor. In a similar manner of stable geodesic in $2$-dimensions, stable minimal hypersurface is a powerful tool to study the ambient geometry of $(X^{n+1},g)$ in higher dimensions. Assuming $M$ is compact and orientable, standard arguments show that
\begin{enumerate}
    \item When $\Ric^X>0$, there are no closed two-sided stable minimal hypersurfaces. If $\Ric^X\geq 0$, then $M$ is totally geodesic and $\Ric^X(\eta,\eta)\equiv 0$ along $M$; see \cite{Simons-minimal-varieties}.
    \item If the ambient scalar curvature is strictly positive, then $M$ also admits a metric of positive  scalar curvature \cite{Schoen-Yau-incompressible-minimal-surfaces, Schoen-Yau-PSC}. In three dimension, each component of $M$ must be a $2$-sphere.
\end{enumerate}

Now, if one assumes that $M^n$ is non-compact with respect to the induced metric, it becomes very complicated and subtle. The theory has been well-developed in $3$-dimensions: if $M^2\hookrightarrow (X^3,g)$ is a complete noncompact two-sided stable minimal immersion, it holds that
\begin{enumerate}
    \item When $R_X\geq 0$, $(M,g)$ is conformal to either a plane or a cylinder \cite{Fischer-Colbrie-Schoen-The-structure-of-complete-stable}. In the latter case, $M$ is totally geodesic, intrinsically flat and $R_X\equiv 0$ along $M$. 
    \item When $\Ric^X\geq 0$, $(M,g)$ is totally geodesic, intrinsically flat and has $\Ric^X(\eta,\eta)\equiv 0$ along $M$ \cite{Schoen-Yau-positive-Ricci-curvature}. Moreover, either $X^3$ is diffeomorphic to $\bR^3$ or its universal cover is isometric to the Riemann product $M^2\times \bR$ \cite{Liu-nonnegative-Ricci-curvature}.
    \item When $R_X\geq 1$, $M$ must be compact \cite{Schoen-Yau-black-hole}. 
\end{enumerate}

Recently, Chodosh, Li and Stryker developed such theory in $4$-dimensions by combining nonnegativity of $2$-intermediate Ricci curvature with strict positivity of scalar curvature. 

\begin{theorem}[\cite{chodosh-li-stryker}]
 If $(X^4,g)$ is a complete $4$-dimensional Riemannian manifold with weakly bounded geometry, satisfying
    \[\operatorname{Ric}^X_2\geq 0\  \text{and}\ R_X\geq \delta>0, \]
where $\Ric_2^X$ is the $2$-intermediate Ricci curvature (Definition \ref{n-1intermediate}). Then any complete two-sided stable minimal immersion $M^3\rightarrow (X^4,g)$ is totally geodesic and has $\operatorname{Ric}^X(\eta,\eta)\equiv 0$ along $M$.
    
\end{theorem}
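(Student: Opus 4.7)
My plan follows the Chodosh-Li paradigm for PSC rigidity of stable minimal hypersurfaces in dimension $4$, whose main novelty over the classical Schoen-Yau $3$-dimensional argument is Gromov's warped $\mu$-bubble diameter bound. The first step is a Schoen-Yau-type rewrite of \eqref{stableinequality}: minimality and the Gauss equation give $R_X = R_M + |A|^2 + 2\Ric_X(\eta,\eta)$, so substituting $\Ric_X(\eta,\eta) = \tfrac12(R_X - R_M - |A|^2)$ into \eqref{stableinequality} yields
\begin{equation*}
\int_M (R_X - R_M + |A|^2)\,\psi^2 \le 2\int_M |\nabla \psi|^2,\qquad \forall \psi\in C^\infty_c(M).
\end{equation*}
Since $R_X\ge\delta>0$ and $|A|^2\ge 0$, this is an integrated ``$R_M\ge\delta + |A|^2$'' statement on $(M^3,g_M)$.

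Next I would run a warped $\mu$-bubble argument on $(M^3,g_M)$ in the spirit of Gromov and Chodosh-Li. Inside a large geodesic ball $B_R(p)\cap M$, I would minimise the functional $\mA(\Omega) = \mH^2(\partial^*\Omega) - \int_\Omega h\,d\mH^3$, where the warp $h$ is the solution of a Ricatti-type ODE tuned to $\delta$, producing a smooth stable $\mu$-bubble $\Sigma^2\subset M^3$. Its weighted second variation, combined with the $2$-dimensional Gauss equation for $\Sigma^2\hookrightarrow M^3$ and the inequality of the preceding paragraph, gives
\begin{equation*}
\int_\Sigma K_\Sigma\,\varphi^2 \ge \tfrac12\int_\Sigma (\delta + |A|^2 + Q)\,\varphi^2 - \int_\Sigma |\nabla\varphi|^2,
\end{equation*}
where the nonnegativity of the ``cross-curvature'' quantity $Q$ uses exactly the hypothesis $\Ric_2^X\ge 0$ applied to the pair of ambient normals to $\Sigma^2\subset X^4$. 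Gauss-Bonnet forces each component of $\Sigma$ to be an $S^2$, and a Hersch-type estimate bounds $\diam(\Sigma)\le C/\sqrt{\delta}$; tracking $h$ back then yields $\mathrm{dist}_M(p,\partial B_R\cap M)\le D(\delta)$ for every $R$.

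Because $M$ is complete and noncompact, letting $R\to\infty$ contradicts this $R$-independent bound unless every inequality used is saturated; saturation of \eqref{stableinequality} together with $R_X\ge\delta>0$ forces $|A|^2\equiv 0$ and $\Ric_X(\eta,\eta)\equiv 0$ along $M$, which is the claimed rigidity. The hard part of the plan is the $\mu$-bubble step: the Ricatti-type ODE for $h$ must be solvable on an interval of length exactly $D(\delta)$ and, simultaneously, must guarantee strictly positive \emph{weighted} Gauss curvature on $\Sigma^2$. This is where $\Ric_2^X\ge 0$ (to keep the Gauss cross-terms nonnegative after the second reduction $\Sigma^2\hookrightarrow M^3\hookrightarrow X^4$) and $R_X\ge\delta$ (to seed the strict positivity) must interlock precisely; ``weakly bounded geometry'' is used to obtain regularity and smooth compactness of the $\mu$-bubble minimiser and of the ambient geometric quantities when passing to the $R\to\infty$ limit.
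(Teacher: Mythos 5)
Your plan has a structural flaw that makes it prove the wrong statement. The endgame you describe---an $R$-independent bound $\mathrm{dist}_M(p,\partial B_R\cap M)\le D(\delta)$, hence a contradiction with noncompactness ``unless every inequality is saturated''---would, if it worked, show that no complete noncompact stable minimal $M^3$ exists at all. That is false under the stated hypotheses: $X^4=S^2\times\mathbb{R}^2$ has $\Ric_2^X\ge 0$, $R_X\equiv 2>0$ and weakly bounded geometry, and the totally geodesic $M^3=S^2\times\mathbb{R}$ is a complete noncompact stable minimal hypersurface (it is exactly the rigid case of the theorem). So no diameter bound on $M$ itself can be proved, and the ``saturation'' fallback is not a valid logical move: a contradiction argument either closes or it does not; you cannot extract the rigidity conclusion from its failure. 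There is also a technical obstruction inside your $\mu$-bubble step: the Schoen--Yau rewrite gives only a spectral (integrated, with gradient terms) positivity of $R_M-|A|^2$, not the pointwise bound $R_M\ge\delta$ that a band-width/Bonnet-type $\mu$-bubble estimate with a prescribed function $h(\mathrm{dist})$ requires. In the actual argument (both in Chodosh--Li--Stryker and in the $5$-dimensional version proved in this paper) this is handled by warping the functional by a power of the positive solution $u$ of $-\Delta_M u=(|A_M|^2+\Ric_X(\eta,\eta))u$, so that stability enters through $u$ in the second variation of the bubble; and the resulting diameter bound is for the slices $\Sigma^2$ of a fixed end, not for $M$.

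The correct architecture, which your proposal is missing entirely, is: (i) prove $M$ has at most one nonparabolic end, via a nonconstant finite-energy harmonic function and the Schoen--Yau-type inequality for $|\nabla u|$---this is where $\Ric_2^X\ge 0$ is really used---together with infinite volume of $M$ (from bounded geometry); (ii) use the $u$-warped $\mu$-bubbles to bound the diameter of the separating slices of the nonparabolic end, then combine with curvature estimates and Bishop--Gromov to get almost linear volume growth of that end; (iii) assuming $|A_M|^2+\Ric_X(\eta,\eta)>0$ somewhere, build compactly supported test functions with Dirichlet energy $o(1)$ (cheap cutoffs on parabolic ends, linear cutoffs across the uniformly-small annuli of the nonparabolic end) and contradict the stability inequality \eqref{stableinequality}; the vanishing of $A_M$ and $\Ric_X(\eta,\eta)$ is the conclusion of this contradiction, not of an equality-case analysis. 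Your Gauss-equation rewrite in the first paragraph is fine and does play a role, but by itself it cannot replace steps (i)--(iii).
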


It is natural to ask what happens in higher dimensions. A fundamental question is to determine which ambient curvature conditions ensure compactness or rule out the existence of complete stable minimal hypersurfaces. As discussed above in $3$-manifolds, $R_X\geq 1$ implies compactness and $\Ric^X>0$ implies non-existence. However, two examples constructed in \cite{chodosh-li-stryker}[Example 1.1,1.2] can be easily adapted to show that neither of these results can hold in higher dimensions. 

Note that for most of aforementioned results, it is essential that no assumption is made on properness or volume growth of $M$. The result in \cite{chodosh-li-stryker} cannot extend to $5$-dimensions directly because $\Ric^X_2\geq 0$ and $R_X>\delta$ are tailor to obtain one-endness and volume growth of hypersurfaces in $4$-dimensions. Hence, exploring alternative curvature conditions to maintain one-endness and volume growth in higher dimensions is the main topic in this paper.

\subsection{Main results}
The purpose of this article is to extend results in \cite{chodosh-li-stryker} to $5$ and $6$ dimensions under different curvature assumptions and stability of CMC hypersurfaces. 

Before discussing our main results, we recall some definitions and introduce some notions.

\begin{definition}\label{weakstabilitydefinition}
    Let $(X^{n+1},g)$, $n\geq 2$, be a  Riemannian manifold. An immersed CMC hypersurface (two-sided if it is minimal) $M^n$ in $X$ is  stable if $$\int_M \left(|A_M|^2+\Ric^X(\eta,\eta)\right)\psi^2\leq \int_M |\nabla \psi|^2$$
    for $\psi\in C_c^\infty(M)$. 
    
    %Moreover, $M$ is said to have finite index if the operator $\Delta_M+|A_M|^2+\Ric^X(\eta,\eta)$ has finite number of negative Dirichlet eigenvalues in any compact subset of $M$.
\end{definition}

\begin{definition}\label{triRic}
    Let $(X^{n+1},g)$, $n\geq 2$, be a  Riemannian manifold. Given a local orthonormal basis $\{e_i\}_{i=1}^{n+1}$ of $T_pX$ and $k, \alpha>0$, we define \textit{$(k,\alpha)$-triRicci curvature} in the direction $(e_i;e_j,e_l)$ as
    \begin{equation}
        (k,\alpha)\operatorname{-triRic}_X(e_i;e_j,e_l):=k\Ric^X(e_i,e_i)+\alpha\text{-}
        \biRic_{e_i^{\perp}}(e_j,e_l),
    \end{equation}
    where $\alpha\text{-}\biRic_{e_i^{\perp}}(e_j,e_l)$ is given by
    \begin{equation*}
        \alpha\text{-}\biRic_{e_i^{\perp}}(e_j,e_l):=\sum_{m\neq i}\R^X_{mjmj}+\alpha\sum_{m\neq i}\R^X_{mlml}-\alpha\R^X_{jljl}, \quad j\neq l
    \end{equation*}
    i.e., we take the partial contraction on the subspace $e_i^{\perp}:=\vspan \{e_l\}_{l\neq i}$ normal to $e_i$. Furthermore, we define $$(k,\alpha)\operatorname{-triRic}_X(p)=\inf \{(k,\alpha)\operatorname{-triRic}_X(e_i;e_j,e_l)\},$$
    where the infimum is taken over all local orthonormal basis $\{e_i\}_{i=1}^{n+1}$ at $p$, and over all admissible index triples $(e_i;e_j,e_l)$, $i\neq j\neq l$.
    If $\alpha=1$, we simply denote it by $k\operatorname{-triRic}_X$.
\end{definition}

\begin{remark}
    When $n=2$ or $3$, $1\operatorname{-triRic}_X$ is just the half of the scalar curvature of $X$. In higher dimensions, $1\operatorname{-triRic}_X$ is the $3$-intermediate curvature $\mathcal{C}_3$ defined by Brendle-Hirsch-Johne \cite{brendlegeroch'sconjecture}.
\end{remark}
\begin{remark}\label{triRicandscalar}
    Direct calculation yields
    \begin{equation*}
    \begin{split}
        &\sum_{i=1}^{n+1}\sum_{j\neq i, l\neq i} k\operatorname{-triRic}_X(e_i;e_j,e_l)\\
        &=\sum_{i=1}^{n+1} \left(kn(n-1)\Ric^X(e_i,e_i)+(2n-3)\sum_{j\neq i, l\neq i}\R^X_{jljl}\right)\\
        &=kn(n-1)\sum_{i=1}^{n+1}\Ric^X(e_i,e_i)+(2n-3)(n-1)\sum_{j,l=1}^{n+1}\R^X_{jljl}\\
        &=((k+2)n-3)(n-1)R^X. 
        \end{split}
    \end{equation*}
In particular, positive $k$-triRicci curvature implies that $X$ has positive scalar curvature. Conversely, $\Sigma^2\times S^{3}(5/12)$ has positive scalar curvature but does not have positive $1$- triRicci curvature where $\Sigma^2$ is a closed hyperbolic surface and $S^3(5/12)$ is a sphere with constant curvature $5/12$.
\end{remark}
\begin{remark}
    When $(X^{n+1},g)$ is the hyperbolic space, $k\operatorname{-triRic}_X=3-(k+2)n.$
\end{remark}

\begin{definition}\label{n-1intermediate}
    Let $(X^{n+1},g)$, $n\geq 2$, be a Riemannian manifold. The $(n-1)$-intermediate Ricci curvature of $X$ at $p\in X$ in the direction $e_l$,  is defined as
    \begin{equation*}
\Ric^X_{n-1}(e_i;e_l)=\sum_{j\in\{1,\cdots,n\}\setminus i} \R_X(e_j,e_l,e_j,e_l)
    \end{equation*}
    where $\{e_1,\cdots,e_n\}$ is any orthonormal basis of $T_pX$. Similarly, we define $$\Ric^X_{n-1}(p)=\inf\{\Ric^X_{n-1}(e_i;e_l): \ i\neq l\}.$$
\end{definition}

%Throughout this article, for $T$ acting on tangent vectors we say $T\geq C$ for a constant $C$ if $T(e_i,\cdots)\geq C$ for any unit vectors $e_i,\cdots \in T_pX$ and any $p\in X.$ 

We now state our results on minimal hypersurfaces in 5-manifolds; more general results can be found in Section \ref{proof-of-main-results}.

\begin{theorem}\label{theorem1introduction}
    If $(X^5,g)$ is a complete $5$-dimensional Riemannian manifold with weakly bounded geometry, satisfying
    \[\operatorname{Ric}^X_3\geq 0\  \text{and} \ k\operatorname{-triRic}_X\geq \delta>0 \ \text{for}\ k\in[1,2]. \]
Then any complete two-sided  stable minimal immersion $M^4\hookrightarrow X^5$ is parabolic with at most two ends. In particular, it is totally geodesic and has $\operatorname{Ric}^X(\eta,\eta)\equiv 0$ along $M.$
\end{theorem}

 It is possible to weaken non-negativity of $\Ric_3^X$ to a mixed curvature condition. Note that $\Ric_3^X\geq 0$ implies $\Ric^X\geq 0$ and $\biRic_X\geq 0$, where $\biRic_X$ is the bi-Ricci curvature (see Definition \ref{biRicci} in detail). In the Appendix, we provide a (one parameter family) closed $5$-dimensional Riemannian manifold $(X^5,g)$, satisfying
    \[\operatorname{Ric}^X> 0,\ \, \biRic_X> 0,\ \, \text{and}\ \ 1\operatorname{-triRic}_X\geq \delta>0, \] while $\Ric^X_3<0$ somewhere. As a matter of fact, the result in Theorem \ref{theorem1introduction} holds with $\Ric_3^X\geq 0$ replaced by $\Ric^X\geq 0$ plus $\biRic_X\geq 0$.

\begin{theorem}\label{maintheorem1introduction}
    If $(X^5,g)$ is a complete $5$-dimensional Riemannian manifold with bounded geometry, satisfying
    \[\operatorname{Ric}^X\geq 0,\ \, \biRic_X\geq 0,\ \, \text{and}\ \ k\operatorname{-triRic}_X\geq \delta>0\ \text{for}\ k\in[1,2]. \]
Then any complete two-sided  stable minimal immersion $M^4\rightarrow X^5$ is parabolic with at most two ends. In particular, it is totally geodesic and has $\operatorname{Ric}^X(\eta,\eta)\equiv 0$ along $M.$
\end{theorem}

Let us make some comments here. The first one is that if $(X^5,g)$ is a manifold with bounded geometry satisfying $\Ric^X>0$, $\operatorname{biRic}_X\geq 0$ and $k\operatorname{-triRic}_X\geq \delta>0\ \text{for}\ k\in[1,2]$, then there is no complete two-sided stable minimal immersion. This follows directly from Theorem \ref{maintheorem1introduction}. As a consequence, there is no two-sided stable minimal immersion in $(\mathbb{S}^5,g_{can})$ (previously proved in \cite{catino}). The second one is that there is no complete two-sided stable minimal immersion in manifold $X$ with $\Ric^X>0 $ and $\biRic_X\geq \delta >0$. Indeed, uniformly positive $\biRic_X$ implies that a complete two-sided stable minimal immersion must be compact \cite{shenyingyerugang}. Then the standard argument leads to contradiction.  Finally, we would like to mention that the conclusion of at most two ends in above two theorems can be compared to the result proved by Fischer-Colbrie and Schoen \cite{Fischer-Colbrie-Schoen-The-structure-of-complete-stable} that a complete two-sided stable minimal surface in a manifold with nonnegative scalar curvature has at most two ends (plane or cylinder). Furthermore, we prove some related results in dimension $n+1=6$ (see Section \ref{n=5}).

\subsection{Main ideas}
The strategy in this article follows from that in \cite{chodosh-li-stryker}. A common feature in the proofs of Theorem \ref{theorem1introduction} and Theorem \ref{maintheorem1introduction} is the estimate on volume growth of ends by using $\mu$-bubbles, a technique introduced by Gromov \cite{gromovmububble}. The method in \cite{chodosh-li-stryker} cannot extend to $5$-dimensions directly because one cannot control curvature terms from the second fundamental form, the ambient Ricci curvature and the positive Jacobi function in the second variation of the warped $\mu$-bubble constructed there. The novelty of our method is to introduce a parameter (the exponent of the warped function) which brings more freedom to exploit curvature conditions. The same strategy works well in the resolution of stable Bernstein problem in $\mathbb{R}^6$ \cite{mazet}.

Another interesting feature of Theorem \ref{theorem1introduction} and Theorem \ref{maintheorem1introduction} is the combination of different curvature conditions which enter the proof at different places. For completeness, we present the overall strategy as follows.

Let $M^n\hookrightarrow (X^{n+1},g)$ $(4\leq n\leq 5)$ be a complete two-sided stable minimal hypersurface.
\begin{enumerate}
    \item To show the vanishing of $A_M$ and $\operatorname{Ric}^X(\eta,\eta)$, we mainly prove that the hypersurface is parabolic. Then the standard equation $\Delta u+(|A_M|^2+\Ric^X(\eta,\eta)) u=0$ will imply the result.

    \item In dimension $6$, the fact that $M$ has at most one nonparabolic end follows from the Schoen--Yau inequality which requires only $\biRic_X\geq 0$ together with the infinite volume of $M$ , ensured by the bounded geometry assumption. When $\Ric^X_3\geq 0$, bounded geometry condition can be weaken to weakly bounded geometry; see \cite{chodosh-li-stryker}[Theorem 4.1] for details. In contrast, in dimension $5$, the splitting result in Theorem \ref{oneend1} implies that $M^4$ has at most two ends. Moreover, if $M^4$ has two ends, it must be a product manifold, see \cite{antonelli-pozzetta-xu,catino-Mari-Mastrolia-Roncoroni,hong-wang-spectral-splitting}. Therefore, we only need to consider the case where $M^4$ has one end.

    \item The construction of a good test function along the nonparabolic end is the main issue in the remaining proof.  This is where the positivity of $k\operatorname{-triRic}$ comes in. By conformal change, it is well known \cite{Schoen-Yau-PSC} that a stable minimal hypersurface admits a metric with positive scalar curvature if the ambient metric does. It is interesting to understand if the positivity of $k\operatorname{-triRic}$ can be inherited by hypersurfaces in some sense.

    As mentioned in \cite{chodosh-li-stryker}, it is difficult to prove the linear volume growth of the nonparabolic end. Similarly, we consider an exhaustion $\{\Omega_l\}_{l=1}^{\infty}$ of $E$ and use the theory of $\mu$-bubbles to show $\partial \Omega_l\cap E$ has controlled volume. This is one difference comparing to \cite{chodosh-li-stryker}. To control curvature terms in the second variation of the $\mu$-bubble, we introduce a parameter $k$ as the exponent of the warped function. Suitable choice of $k$ yields the desired volume estimate; see Lemma \ref{diameterforclosedbubble} in detail. This is the step that needs the assumption of strictly positive $k\operatorname{-triRic}$.  
    \item We need to upgrade volume control of the bubble to volume control of the slice which needs a lower bound on the intrinsic Ricci curvature of $M$. It is achieved by combining several curvature conditions and curvature estiamtes in Lemma \ref{estimate}.
    \item Finally, to prove parabolicity, we need to 
construct test functions $\varphi_i$ such that $\int_M|\nabla\varphi_i|^2$ tends to zero. This is achievable due to the facts that we have at most one nonparabolic end and the nonparabolic end satisfies almost linear volume growth.
    
\end{enumerate}

\subsection{Organization}
In Section 2, we recall some preliminaries on curvature estimates and ends of non-compact manifolds. In Section 3, we establish the splitting theorem and the one-end theorem under various curvature conditions. Section 4 is devoted to the balanced warped $\mu$-bubble exhaustion and volume growth estimates. The main results are proved in Section 5. The Appendix presents an example related to Theorem \ref{maintheorem1introduction}.
\subsection{Acknowledgements}
We would like to thank Prof. Xianzhe Dai, Prof. Martin Li, Prof. Guofang Wei and Prof. Rugang Ye for their interests in this work. The first author is supported by NSFC No. 12401058 and the Talent Fund of Beijing Jiaotong University No. 2024XKRC008. The second author is supported by a Simons Travel Grant.

\section{Preliminary}
\subsection{Curvature conditions}
If $(X^{n+1},g)$ is a Riemannian manifold, we denote the Riemann curvature tensor, the sectional curvature, the Ricci curvature tensor and the scalar curvature by $\R^X_{ijij}$, $\sec_X$, $\Ric^X_{ij}$ and $\R^X$ respectively. Let $M\hookrightarrow X^{n+1}$ be an immersion. We write $\Ric^M$ and $R^M$ to denote the Ricci curvature and scalar curvature of the pullback metric on $M$. We set the mean curvature to be the trace of the second fundamental form of the immersion.
\begin{definition}
	A manifold $X^{n+1}$ has \textit{bounded geometry} if there exist positive real numbers $\delta$ and $\lambda$ such that on $X^{n+1}$
	\begin{equation*}
		|\sec_X|\leqslant \delta^2, \quad \inj_X\geqslant \lambda
	\end{equation*}
 where $\inj_X$ is the injectivity radius.
\end{definition}

\begin{definition}\label{biRicci}
    Let $(X^{n+1},g)$ be a Riemannian manifold and $\{e_i\}_{i=1}^{n+1}$ be an orthonormal basis for $T_pX$, $p\in X$. The \textit{bi-Ricci curvature} along directions $(e_1,e_2)$ is defined by
    \begin{equation*}
        \biRic_X(e_1,e_2):=\sum_{i=2}^{n+1}\R^X(e_1,e_i,e_i,e_1)+\sum_{j=3}^{n+1}\R^X(e_2,e_j,e_j,e_2).
    \end{equation*}
   We denote $\biRic_X:=\inf\{\biRic_X(e_1,e_2):\ p\in X\ \text{and}\ e_1, e_2\in T_pX\}.$
\end{definition}
This concept was used by Shen-Ye \cite{shenyingyerugang} to study diameter of stable minimal submanifolds.
\begin{lemma}\label{Ric-lower}
    Let $(X^{n+1},g)$ be a $n+1$-dimensional Riemannian manifold with bounded geometry. If $M^n\hookrightarrow (X^{n+1},g)$ is an immersed CMC hypersurface, then
    \begin{equation*}
        \Ric^M\geqslant -Cg,
    \end{equation*}
    where the positive constant $C$ depends on $n$, $\sec_X$, $H^M$ and $\sup |A^M|$.
\end{lemma}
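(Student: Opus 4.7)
The plan is to read off the bound directly from the Gauss equation, using $\biRic_X\geq 0$ to replace the missing lower bound on $\sec_X$ that bounded geometry does not provide. Fix $p\in M$, a unit $v\in T_pM$, and a unit normal $\eta$; complete $\{v,\eta\}$ to a local orthonormal frame $\{e_1=v,\,e_2=\eta,\,e_3,e_4,e_5\}$ of $T_pX$, with $\{e_1,e_3,e_4,e_5\}$ tangent to $M$. The Gauss equation combined with the CMC condition $\tr A_M = H_M$ gives
\[
\Ric_M(v,v) \;=\; \bigl(\Ric_X(v,v)-\Rm_X(v,\eta,\eta,v)\bigr) + H_M\,A_M(v,v) - |A_M(v,\cdot)|^2,
\]
where $|A_M(v,\cdot)|^2 := \sum_{\alpha} A_M(v,f_\alpha)^2 \leq |A_M|^2$ computed over any orthonormal basis $\{f_\alpha\}$ of $T_pM$.

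The key rewriting is to bring in $\biRic_X$. Unpacking Definition~\ref{biRicci} in the basis above, one checks the identity
\[
\biRic_X(v,\eta) \;=\; \Ric_X(v,v)+\Ric_X(\eta,\eta)-\Rm_X(v,\eta,\eta,v),
\]
so that the bracketed ambient term equals $\biRic_X(v,\eta)-\Ric_X(\eta,\eta)$. The hypothesis $\biRic_X\geq 0$ kills the first summand, and the upper sectional bound $\sec_X\leq\delta^2$ afforded by bounded geometry yields
\[
\Ric_X(\eta,\eta)=\sum_{i\neq 2}\Rm_X(\eta,e_i,e_i,\eta)\leq 4\delta^2.
\]
Hence the ambient contribution is at least $-4\delta^2$.

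For the extrinsic part, the triangle inequality and the operator norm bound $|A_M(v,v)|\leq |A_M|$ give $|H_M\,A_M(v,v)|\leq |H_M|\,|A_M|$, while $|A_M(v,\cdot)|^2\leq |A_M|^2$ was already noted. Combining everything produces
\[
\Ric_M(v,v)\;\geq\; -4\delta^2-|H_M|\,|A_M|-|A_M|^2,
\]
which proves the lemma with $C := 4\delta^2+|H_M|\,|A_M|+|A_M|^2$. I do not foresee any real obstacle: the computation is a direct algebraic manipulation of the Gauss equation, and the only subtlety worth flagging is that bounded geometry here supplies only the upper sectional bound $\sec_X\leq\delta^2$, so a lower sectional bound is not available; it is precisely $\biRic_X\geq 0$ that furnishes the missing lower control on the ambient term $\Ric_X(v,v)-\Rm_X(v,\eta,\eta,v)$.
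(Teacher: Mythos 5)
Your proof is correct and follows essentially the same route as the paper: both start from the traced Gauss equation and use $\biRic_X\geq 0$ together with the upper sectional bound from bounded geometry to supply the missing lower control on the ambient curvature term. The only difference is bookkeeping: the paper first deduces the two-sided bound $|\sec_X|\leq 6\delta^2$ and then estimates each term, whereas you bound the combination $\Ric_X(v,v)-\Rm_X(v,\eta,\eta,v)=\biRic_X(v,\eta)-\Ric_X(\eta,\eta)\geq -4\delta^2$ directly, which yields a marginally sharper constant.
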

\begin{proof}
    Choose an orthonormal basis $\{e_i\}_{i=1}^{n+1}$ of $X^{n+1}$ at a point $p\in M$ with $e_{n+1}$ normal to $M$. The traced Gauss equation gives
    \begin{equation*}
        \Ric^M(e_1,e_1)=\sum_{i=1}^{n} \R^X_{i1i1}+A^M(e_1,e_1)H^M-\sum_{j=1}^{n}A^M(e_1,e_j)^2,
    \end{equation*}
    which yields the desired result.
\end{proof}

Without bounded geometry, there is an alternative way to ensure the lower bound on $\Ric_M$.
\begin{lemma}\label{Ric-lower1}
    Let $(X^{n+1},g)$ be a $(n+1)$-dimensional Riemannian manifold with $\Ric^X_{n-1}\geq C_1$ for a constant $C_1$. If $M^n\hookrightarrow (X^{n+1},g)$ is an immersed CMC hypersurface, then
    \begin{equation*}
        \Ric^M\geqslant -Cg,
    \end{equation*}
    where the positive constant $C$ depends on $n$, $C_1$, $H^M$ and $\sup|A^M|$. In particular, $C$ can be chosen to be zero if $M$ is totally geodesic and $C_1=0$.
\end{lemma}
\begin{proof}
    The result holds because
    \begin{align*}
        \Ric^M(e_1,e_1)=&\sum_{i=1}^{n} \R^X_{i1i1}+A^M(e_1,e_1)H^M-\sum_{j=1}^{n}A^M(e_1,e_j)^2\\
        \geq & \frac{5-n}{4}H_M^2-|A^M|^2
    \end{align*}
    where the inequality follows from $$|A^M|^2+H^M A^M(e_1,e_1)-\sum_{i=1}^nA^M(e_1,e_j)^2\geq \frac{5-n}{4}H_M^2.$$
        
\end{proof}

\subsection{Curvature estimates}
%Recently, Chodosh, Li, Minter and Stryker proved that a two-sided complete stable minimal hypersurface $M^4\hookrightarrow \bR^5$ is flat \cite{Chodosh-Li-Minter-Stryker}. Mazet generalized it to $\mathbb{R}^6$ \cite{mazet}. 
In this subsection, we recall the standard curvature estimates for stable CMC hypersurfaces in non-compact manifolds.

%We first recall the stability for CMC hypersurface. We say that a complete noncompact CMC immersion $M^4\hookrightarrow  \bR^5$ is (strongly) stable if
%\begin{equation}\label{stableinequality}
%    \int_M \left(|A_M|^2+\Ric^X(\eta,\eta)\right)\psi^2\leq \int_M |\nabla \psi|^2
%\end{equation}
%for any $\psi\in C^{\infty}_c(M)$. Furthermore, we say the a complete noncompact CMC hypersurface is of finite index if the Jacobi operator $L=\Delta_M+|A_M|^2+\operatorname{Ric}_X(\eta,\eta)$ has uniform finite number of negative Dirichlet eigenvalues on any compact subset of $M.$

\begin{lemma}\label{estimate}
    Let $(X^{n+1},g)$ $(4\leq n\leq 5)$ be a $(n+1)$-dimensional complete Riemannian manifold with weakly bounded geometry. Then there is a constant $C$ depending on $X$ and $H^M$ such that every complete noncompact stable CMC hypersurface  (two-sided if it is minimal) $M^n\hookrightarrow (X^{n+1},g)$ with the mean curvature $H^M$ and compact boundary (possibly empty) satisfies
    \begin{equation*}
        \sup_{q\in \Omega}|A^M(q)|\min \{1,d_M(q,\partial \Omega)\}\leqslant C
    \end{equation*}
    for any compact subset $\Omega\subset M$.
\end{lemma}

\begin{proof}
    Observe that when scaling the metric by the norm of the second fundamental form that tends to infinity, the limit of sequence of stable CMC hypersurface is a stable minimal hypersurface. Therefore, with the stable Bernstein theorem in $\mathbb{R}^5$ and $\mathbb{R}^6$, the statement follows from \cite{chodosh-li-stryker}[Lemma 2.4] and \cite{Hong24}[Lemma 3.6] with minor modification; see also \cite{Rosenberg-Souam-Toubiana}. 
\end{proof}

\subsection{Ends of manifold}
We now recall some preliminary results on parabolicity and nonparabolicity of manifolds; see \cite{Li-Geometric-analysis} for more details. 
\begin{definition}
    Let $M$ be a complete noncompact manifold, and let $K\subset M$ be a compact subset and let $E$ be an unbounded connected component of $M\setminus K$ with smooth boundary. Then $E$ is said to be nonparabolic with respect to $K$ if there exists a positive harmonic function $f$ satisfying 
    \[f|_{\partial E}=1,\ \text{and}\ \inf_E f=0.\]
    Otherwise, $E$ is parabolic.
\end{definition}
\begin{definition}
    Fix a point $x\in M,$ and fix $L>0.$ A collection of open sets $\{E_k\}$ is an end of $M$ adapted to $x$ and length scale $L$ if each set $E_k$ is an unbounded connected component of $M\setminus \bar{B}_{kL}(x)$ and satisfies $E_{k+1}\subset E_k.$
\end{definition}
Without lost of generality, we usually ignore the fixed point $x$ if it is clear in the context.
\begin{definition}
    A boundary end $\{E_k\}_{k=1}$ with length scale $L>0$ is nonparabolic if for every $k$, $E_k$ is an unbounded component of $M\backslash B_{kL}$ and $E_k$ is nonparabolic.
\end{definition}

\section{Ends of hypersurfaces} 

\subsection{Splitting theorem and finite ends}
In this subsection,  we show that in a manifold $(X^{n+1},g)$, the number of ends of a stable CMC hypersurface $M^n$ is at most two under certain conditions.

The stability of $M$ implies that for any $\varphi\in C_c^1(M)$,
\[\int_M |\nabla\varphi|^2\geq \int_M (|A|^2+\Ric^X(\nu,\nu))\varphi^2.\]
Let $\Ric^M(x)=\inf_{x\in M}\{\Ric^M(\tau,\tau): |\tau|=1\ \text{and}\ \tau\in T_xM\}$. Suppose $e_1$ is an unit tangent vector of $M$ at $x$ such that $\Ric^M(x)=\Ric^M(e_1,e_1)$. Then the curvature rearrangement gives
\begin{align*}
    \int_M |\nabla \varphi|^2+\alpha \Ric^M(x)\varphi^2\geq&\int_M (|A|^2+\Ric^X(\nu,\nu)+\alpha\Ric^M(e_1,e_1))\varphi^2\\
    \geq& \int_M (\Ric^X(\nu,\nu)+\alpha \sum_{i=1}^n \R^X_{1i1i})\varphi^2\\
    +& \int_M (|A|^2+\alpha A_{11}H-\alpha \sum_{i=1}^n A^2_{1i})\varphi^2.
\end{align*}
Terms involving the second fundamental from on the right-hand side is bounded from below by
\[\frac{4-(n-1)\alpha^2}{4((1-\alpha)n+\alpha)}H^2, \quad \alpha<\frac{n}{n-1};\]
see Deng \cite[Lemma 2.1]{dengqintao}. Moreover, the equality holds iff 
\begin{equation*}
    A_{11}=-\frac{(n-1)\alpha-2}{2((1-\alpha)n+\alpha)}H,\quad A_{ii}=\frac{2-\alpha}{2((1-\alpha)n+\alpha)}H, \quad i\neq 1.
\end{equation*}

Assuming that $\alpha\text{-}\biRic_X(e_1,\nu):=\Ric^X(\nu,\nu)+\alpha \sum_{i=1}^n \R^X_{1i1i}$ satisfies
\[\alpha\text{-}\biRic+\frac{4-(n-1)\alpha^2}{4((1-\alpha)n+\alpha)}H^2\geq 0,\]
 we have
\[\int_M |\nabla \varphi|^2+\alpha \Ric^M(x)\varphi^2\geq 0.\]
Then it follows from works of Antonelli--Pozzetta--Xu \cite{antonelli-pozzetta-xu}, Catino--Mari--Mastrolia--Roncoroni \cite{catino-Mari-Mastrolia-Roncoroni} and Hong--Wang \cite{hong-wang-spectral-splitting} (using different ideas) that if $M$ has at least two ends and $\alpha>\frac{n-1}{4}$, then $\Ric^M\geq 0$ on $M.$ By the classical Cheeger--Gromoll splitting theorem, $M$ is a product manifold $\Sigma\times \mathbb{R}$ with linear volume growth, and thus parabolic manifold. 

%Since $M$ is stable, then there exists a positive function $u$ such that\[\Delta u +|A|^2u+\Ric^X(\nu,\nu)u=0.\]
%Let $e_1=\partial_t$ be the unit vector field in the direction of $\mathbb{R}$. Note that $\Ric^M(e_1,e_1)=0$. Thus\begin{align*} \Delta u&=-(|A|^2+\Ric^X(\nu,\nu))u\\ &\leq -(|A|^2+\alpha\text{-}\biRic_X(e_1,\nu)-\alpha\Ric^M(e_1,e_1)-\alpha Hh_{11}-\alpha\sum_{i=1}^nh_{1i}^2)u\\ &\leq  -(\alpha\text{-}\biRic_X+|A|^2-\alpha Hh_{11}-\alpha\sum_{i=1}^nh_{1i}^2)u\\ &\leq -\left(\alpha\text{-}\biRic_X+\frac{4-(n-1)\alpha^2}{4((1-\alpha)n+\alpha)}H^2\right).\end{align*}Thus, if $\alpha<\frac{2}{\sqrt{n-1}}$, then $\Delta u\leq 0$. By parabolicity, we know that $u$ is constant. Thus $H=0$. Then the equality in the last inequality implies that $A_{ii}=0$, i.e., $M$ is totally geodesic. Lastly,$\Ric^X(\nu,\nu)=0.$

Thus, we have the following theorem.
 
\begin{theorem}\label{oneend1}
	Let $(X^{n+1},g)$ ($n\leq 5$) be a complete manifold  and 
    $$\alpha\text{-}\biRic_X+\frac{4-(n-1)\alpha^2}{4((1-\alpha)n+\alpha)}H^2\geqslant 0$$
    for $\alpha\in (\frac{n-1}{4},\frac{n}{n-1})$. Let $M^n\hookrightarrow (X^{n+1},g)$ be a complete stable (two-sided if it is minimal) CMC immersion with the mean curvature $H$. Let $K\subset M$ be a compact subset of $M$ with smooth boundary. Then $M\backslash K$ admits at most two unbounded components. In particular, $M$ has at most one non-parabolic end.
\end{theorem} 

In particular, in order to set $\alpha=1$, the dimension $n$ must be smaller than $5$. The following result in minimal case was first shown in \cite[Corollary 1.2]{antonelli-pozzetta-xu}. 

\begin{theorem}\label{oneend}
	Let $(X^{n+1},g)$ ($n\leq 4$) be a complete manifold  with $$\biRic_X+\frac{5-n}{4}H^2\geqslant 0.$$ Let $M^n\hookrightarrow (X^{n+1},g)$ be a complete  stable (two-sided if it is minimal) CMC immersion  with the mean curvature $H$. Let $K\subset M$ be a compact subset of $M$ with smooth boundary. Then $M\backslash K$ admits at most two unbounded components. In particular, $M$ has at most one non-parabolic end.
\end{theorem}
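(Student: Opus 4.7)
The plan is to argue by contradiction in the Li--Tam / Cao--Shen--Zhu framework: assume that $M\setminus K$ admits two nonparabolic unbounded components $E_1, E_2$. By standard Li--Tam theory applied to the nonparabolic ends, construct a bounded, non-constant harmonic function $u:M\to[0,1]$ with finite Dirichlet energy $\int_M|\nabla u|^2<\infty$, taking different limiting values at infinity along $E_1$ and $E_2$. The strategy is to feed $\phi|\nabla u|$ into the weak stability inequality and then, via a Bochner--Kato computation that exploits $\biRic_X\geq 0$, conclude $|\nabla u|\equiv 0$, which contradicts the non-triviality of $u$.

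The algebraic heart is a Schoen--Yau-type pointwise inequality. The Bochner identity for the harmonic $u$,
\[\tfrac{1}{2}\Delta_M|\nabla u|^2 = |\nabla^2 u|^2 + \Ric_M(\nabla u,\nabla u),\]
the refined Kato inequality $|\nabla^2 u|^2\geq \tfrac{n}{n-1}\lvert\nabla|\nabla u|\rvert^2$ for harmonic functions, and the lower bound on $\Ric_M$ produced by the Gauss equation for a minimal hypersurface together with $\biRic_X\geq 0$, namely
\[\Ric_M(X,X) \geq -\bigl(|A_M|^2+\Ric_X(\eta,\eta)\bigr)|X|^2,\]
combine (where $|\nabla u|>0$) to give
\[|\nabla u|\,\Delta_M|\nabla u| + \bigl(|A_M|^2+\Ric_X(\eta,\eta)\bigr)|\nabla u|^2 \geq \tfrac{1}{n-1}\lvert\nabla|\nabla u|\rvert^2.\]
Multiplying by a compactly supported cutoff $\phi^2$ and integrating by parts, then inserting $\phi|\nabla u|$ into the stability inequality, yields after rearrangement
\[\tfrac{1}{n-1}\int_M \phi^2\lvert\nabla|\nabla u|\rvert^2 \leq \int_M|\nabla u|^2|\nabla\phi|^2.\]
Choosing $\phi$ to be a standard logarithmic cutoff on $B_{2R}\setminus B_R$ with $|\nabla\phi|\leq 1/R$ and sending $R\to\infty$ (using $|\nabla u|\in L^2$) forces $\lvert\nabla|\nabla u|\rvert\equiv 0$, so $|\nabla u|$ is constant; since $M$ has infinite volume (a consequence of bounded geometry via monotonicity) and $|\nabla u|\in L^2(M)$, it follows that $|\nabla u|\equiv 0$.

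The one genuine adjustment for \emph{weak} stability is to enforce the mean-zero condition on the test function. Given $\phi\in C^\infty_c(M)$, introduce a compactly supported compensator $\chi$ with $\supp\chi$ disjoint from $\supp\phi$ and $\int_M\chi=-\int_M\phi|\nabla u|$, and apply weak stability to $\phi|\nabla u|+\chi$. Since the supports are disjoint, all cross terms in $(\phi|\nabla u|+\chi)^2$ vanish. Because $M$ has infinite volume and $|A_M|^2$ is locally uniformly bounded by the curvature estimate of Lemma~\ref{estimate}, one can distribute the mass of $\chi$ over an arbitrarily large far-away region so that $\int\chi^2$, $\int|\nabla\chi|^2$, and $\int|A_M|^2\chi^2$ are simultaneously as small as desired. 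Passing to the limit recovers exactly the inequality used above.

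The main obstacle is precisely this compensator construction: reconciling the weak stability constraint $\int\psi=0$ with the natural gradient test function $\phi|\nabla u|$ requires the infinite-volume consequence of bounded geometry together with the curvature bound for stable minimal hypersurfaces. Apart from that technical adjustment, the argument is a direct application of the Li--Tam/Cao--Shen--Zhu scheme specialized to the $\biRic_X\geq 0$ setting.
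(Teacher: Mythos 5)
Your overall scheme — produce a non-constant harmonic function $u$ with finite Dirichlet energy from two nonparabolic components, run Bochner plus the refined Kato inequality, use the Gauss equation with $\biRic_X\geq 0$ to get $\Ric_M\geq -(|A_M|^2+\Ric_X(\eta,\eta))$, feed $\phi|\nabla u|$ into stability, and conclude $|\nabla u|$ is constant and then zero by infinite volume — is exactly the paper's route (Lemma~\ref{harmonic}, Lemma~\ref{SYL}, Theorem~\ref{infinitevol}), and that part of your computation is correct. The genuine gap is in the one step you flag as your "adjustment" for weak stability: the compensator $\chi$. You need $\int_M\chi=-\int_M\phi|\nabla u|$ while $\int\chi^2$ and $\int|\nabla\chi|^2$ (the $\int|A_M|^2\chi^2$ term actually has a favorable sign and never needs control, but the negative part of $\Ric_X(\eta,\eta)\chi^2$ does, so small $\int\chi^2$ is required) become negligible. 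Infinite volume alone does not give this: prescribing $\int\chi$ while making $\|\chi\|_{H^1}$ arbitrarily small fails, e.g., when ball volumes grow superexponentially, and no such growth control is available here — a bound would come from $\Ric_M$ bounded below, i.e., bounded $|A_M|$, but Lemma~\ref{estimate} is stated for \emph{strongly} stable hypersurfaces in ambient dimension five, whereas Theorem~\ref{oneend} is for weakly stable immersions in all dimensions. The difficulty is compounded because your cutoff $\phi$ is $1$ on $B_R$, so $\int\phi|\nabla u|\geq\int_{B_R}|\nabla u|\gtrsim R\to\infty$ (indeed $\int_M|\nabla u|=\infty$), so the compensator must absorb an unbounded amount of mass as $R\to\infty$ while its $H^1$-contribution stays $o(1)$; as written this is asserted, not proved, and it is not a consequence of the stated hypotheses.

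The paper avoids this entirely by balancing \emph{inside} the weighted test function: it takes $\phi=\phi_1+t\phi_2$, where $\phi_2$ is a distant plateau at height $-1$, and uses $\int_M|\nabla u|=\infty$ (from the gradient estimate $\int_{B_R}|\nabla u|\gtrsim R$) to choose $t_0\in(0,1)$ with $\int_M\phi(t_0)|\nabla u|=0$, so the admissible test function is $\phi(t_0)|\nabla u|$ itself. Then every error term is weighted by $|\nabla u|^2$ and is bounded by $\tfrac{4}{R^2}\int_M|\nabla u|^2\to 0$ using only the finite Dirichlet energy — no volume growth, no curvature estimate, and no dimension restriction. If you replace your unweighted compensator by this weighted balancing (i.e., compensate with $t\,\phi_2|\nabla u|$ on a far-away region, justified by $\int_M|\nabla u|=\infty$), your argument closes and coincides with the paper's proof.
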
 

Therefore, under the assumptions of our main theorems in Section \ref{proof-of-main-results}, it suffices to consider the case where $M^n$ has only one end. We will demonstrate in Section \ref{proof-of-main-results} that this unique end is parabolic.

\subsection{Finite nonparabolic ends of minimal hypersurfaces in general dimensions}
In this subsection, following  \cite{Li-Wang-nonnegatively-curved-manifold} we show that in a manifold $(X^{n+1},g)$ with certain conditions, the number of nonparabolic ends of a stable CMC hypersurface $(M^n,g)$ is at most one.  This fact is known to experts. We recall some details of its proof.

\begin{theorem}\label{oneend3}
Let $(X^{n+1},g)$ be a complete manifold and $M^n\hookrightarrow (X^{n+1},g)$ be a complete stable (two-sided if it is minimal) CMC immersion with the mean curvature $H$. Let $K\subset M$ be a compact subset of $M$ with smooth boundary. Assume that
    \begin{itemize}
        \item[(1)]  either $\Ric^X_{n-1}\geq 0$ and $H=0$,
        \item[(2)] or $\biRic_X+\frac{5-n}{4}H^2\geqslant 0$ and $X$ has bounded geometry.
    \end{itemize}
     Then $M\backslash K$ admits at most one nonparabolic unbounded component. In particular, $M$ has at most one nonparabolic end.
\end{theorem} 
There is no dimension restriction, but only $n\geq 5$ is relevant for our purposes, as the splitting result in lower dimensions has already been established in the previous subsection. Theorem \ref{oneend3} was proved under the assumption $(1)$, so we will focus on assumption $(2)$.

The following inequality of Schoen and Yau is well-known for minimal hypersurface under nonnegative sectional curvature. Note that in dimension 4, Chodosh, Li and Stryker \cite{chodosh-li-stryker} generalized it under weaker assumption $\Ric^X_2\geq 0$. They also mentioned that in higher dimension $n+1$, $\Ric^X_{n-1}\geq 0$ plays an analogous role. To the best of the authors’ knowledge, the most general form of this inequality was established in \cite{Cheng-Cheung-Zhou-CMC}[Lemma 3.2]. It is worth noting that $\Ric^X_{n-1}\geq 0$ implies $\biRic_X\geq0.$  

\begin{lemma}\label{SYL}\cite[Lemma 3.2]{Cheng-Cheung-Zhou-CMC}
	Let $(X^{n+1},g)$ be a complete manifold. Let $M^n\hookrightarrow (X^{n+1},g)$ be a complete stable (two-sided if it is minimal) CMC immersion with the mean curvature $H$. Let $u$ be a harmonic function on $M$. Then
	\begin{equation}\label{SY}
		\int_{M}(\biRic_X(e,\nu)+\frac{5-n}{4}H^2)\phi^2|\nabla u|^2+\frac{1}{n-1}\int_M \phi^2 |\nabla|\nabla u||^2\leqslant \int_M |\nabla \phi|^2|\nabla u|^2
	\end{equation}
for any compactly supported, non-negative function $\phi\in W^{1,2}(M)$ where $e=\nabla u/|\nabla u|.$ %Moreover, if the equality holds, we have that $A_{ii}=\frac{1}{2}H$ for $i=2,\cdots,n$ and $A_{11}=-\frac{n-3}{2}H.$
\end{lemma}

Recall the following result which is first proved for complete orientable minimal hypersurfaces in $\bR^{n+1}$ \cite{Cao-Shen-Zhu-infinitevolume}.

\begin{theorem}[\cite{Frensel-CMC}{Theorem~1}]\label{infinitevol}
    Let $M^n$ be a complete, noncompact manifold, and let $M^n\hookrightarrow X^{n+1}$ be an isometric immersion with bounded mean curvature. If $X^{n+1}$ has bounded geometry, the volume of $M^n$ is infinite.
\end{theorem}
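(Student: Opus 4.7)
The plan is to combine a uniform local volume lower bound, coming from the monotonicity formula for submanifolds of bounded mean curvature, with a packing argument exploiting the non-compactness of $M$.

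First, I would invoke the Hoffman--Spruck monotonicity formula. Writing $\pi\colon M \to X$ for the immersion and fixing $|H_M| \leq H_0$, the bounded-geometry bounds $\sec_X \leq \delta^2$ and $\inj_X \geq \lambda$ yield constants $r_0 > 0$ and $c > 0$, depending only on $n,\delta,\lambda,H_0$, such that for every $p \in M$ the connected component $\Sigma_p$ of $\pi^{-1}(B^X_{r_0}(\pi(p)))$ containing $p$ satisfies
\[
\operatorname{Vol}(\Sigma_p) \geq c\, r_0^n.
\]

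Next, I would split into two scenarios based on $\pi(M) \subset X$. If $\pi(M)$ contains an infinite $2r_0$-separated subset, a greedy selection yields $\{p_i\}_{i\geq 1} \subset M$ with $d_X(\pi(p_i), \pi(p_j)) > 2r_0$ for $i \neq j$; the extrinsic balls $B^X_{r_0}(\pi(p_i))$ are then pairwise disjoint in $X$, so the components $\Sigma_{p_i}$ are pairwise disjoint in $M$, and summing the local lower bound gives $\operatorname{Vol}(M) \geq \sum_i c r_0^n = \infty$. Otherwise $\pi(M)$ is totally bounded, hence precompact in $X$ by completeness of the ambient; covering $\overline{\pi(M)}$ by finitely many balls $B^X_{r_0/2}(y_1),\dots,B^X_{r_0/2}(y_N)$ and assuming toward contradiction that $\operatorname{Vol}(M) < \infty$, I would observe that each preimage $\pi^{-1}(B^X_{r_0}(y_j))$ admits at most $\operatorname{Vol}(M)/(c r_0^n)$ connected components. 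Since $M$ is complete, non-compact and connected while covered by finitely many such components in total, at least one component $\Sigma^*$ must itself be non-compact; iterating the monotonicity bound inside $\Sigma^*$ along an intrinsic geodesic ray would extract infinitely many disjoint monotonicity contributions, again forcing $\operatorname{Vol}(M) = \infty$ and producing the contradiction.

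The main obstacle is precisely this trapped-image scenario: one has to rule out the possibility that the intrinsic non-compactness of $M$ is absorbed into a bounded region of $X$ while $M$ retains finite volume. Although each monotonicity contribution is extrinsic, the iteration must convert intrinsic divergence into unbounded volume. This interplay between intrinsic and extrinsic geometry is the technical heart of the argument; both the bounded-mean-curvature hypothesis and the bounded-ambient-geometry hypothesis are essential, feeding the monotonicity formula and also controlling how the intrinsic geometry propagates through the iteration.
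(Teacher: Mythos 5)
The statement is quoted in the paper from Frensel's work, so the relevant comparison is with that argument, which proves a uniform lower bound for the volume of \emph{intrinsic} balls: for every $p\in M$ and a fixed small $r$ (depending only on $n$, the bound on $|H_M|$, $\sec_X$ and $\inj_X$), one has $\Vol(B^M_r(p))\geq c>0$. This is obtained by applying the Hoffman--Spruck Sobolev inequality to cut-offs of the intrinsic distance function, which yields a differential inequality for $t\mapsto \Vol(B^M_t(p))$ and hence the lower bound; since $M$ is complete and noncompact, one then picks points $p_i$ going to infinity with $d_M(p_i,p_j)>2r$, and the balls $B^M_r(p_i)$ are automatically disjoint, giving infinite volume with no case distinction on the image $\pi(M)$.

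Your proposal has a genuine gap exactly at the point you flag as the ``technical heart.'' In the trapped-image scenario your disjointness mechanism disappears: the separation of the monotonicity contributions in the first scenario came from \emph{extrinsic} separation of the centers, but once all of $\Sigma^*$ maps into a single ball $B^X_{r_0}(y_j)$, points along an intrinsic geodesic ray have images at mutual extrinsic distance $<2r_0$, so the components of the preimages of extrinsic balls centered at their images can (and typically do) overlap; ``iterating the monotonicity bound along the ray'' does not extract disjoint pieces, and nothing in your sketch supplies a substitute. This case is not pathological: Nadirashvili-type complete bounded minimal immersions show that a complete noncompact $M$ can indeed have precompact image, so it must be handled in full. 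Note also two secondary issues: the component $\Sigma_p$ of $\pi^{-1}(B^X_{r_0}(\pi(p)))$ \emph{contains} the intrinsic ball $B^M_{r_0}(p)$, so a volume lower bound for $\Sigma_p$ does not bound $\Vol(B^M_{r_0}(p))$ from below (which is what an intrinsic disjointness argument needs); and your component count ``at most $\Vol(M)/(cr_0^n)$ components'' fails because components that merely graze $B^X_{r_0}(y_j)$ near its boundary carry no lower volume bound (moreover the monotonicity lower bound itself requires the component to be properly immersed with boundary on the sphere, which is precisely what may fail in the trapped case). All of these are repaired simultaneously by proving the uniform intrinsic-ball volume bound first, as in the cited proof.
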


For contradiction, suppose that $\{E_k\}$ and $\{\cE_k\}$ are distinct nonparabolic ends of $M$ adapted to a point $p\in M$ with length scale $L$. We then produce a non-constant harmonic function; see \cite{chodosh-li-stryker}[Lemma 4.3] for detailed proof.
\begin{lemma}\label{harmonic}
    Let $M^n$ be a complete Riemannian manifold. Let $K\subset M$ be a compact subset of $M$ with smooth boundary. Suppose that $M\backslash K$ has at least two nonparabolic unbounded components. Then there exists a non-constant harmonic function with finite Dirichlet energy on $M$.
\end{lemma}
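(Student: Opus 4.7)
The plan is to construct the harmonic function as a locally uniform limit of solutions to Dirichlet problems on an exhaustion of $M$, where the boundary values are designed to distinguish the two nonparabolic ends; non-constancy and finiteness of the Dirichlet energy will then both be read off by comparison with the canonical barrier functions whose existence characterizes nonparabolicity. This is the standard Li--Tam scheme.

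First, let $E_{1}$ and $E_{2}$ be two distinct nonparabolic unbounded components of $M\setminus K$, fix $p\in K$, and for every $R$ large enough that $K\subset B_{R}(p)$ solve
\begin{equation*}
\Delta u_{R}=0\ \text{on}\ B_{R}(p),\qquad u_{R}=\chi_{E_{1}}\ \text{on}\ \partial B_{R}(p).
\end{equation*}
The maximum principle gives $0\le u_{R}\le 1$, and by interior elliptic estimates plus a diagonal argument a subsequence $u_{R_{j}}$ converges in $C^{2,\alpha}_{\mathrm{loc}}$ to a harmonic function $u$ on $M$. For each $i\in\{1,2\}$, let $f^{i}_{R}$ be the harmonic function on $E_{i}\cap B_{R}(p)$ with $f^{i}_{R}=1$ on $\partial E_{i}$ and $f^{i}_{R}=0$ on $\partial B_{R}(p)\cap E_{i}$, and let $f^{i}=\lim_{R\to\infty}f^{i}_{R}$ be its barrier limit, which by the nonparabolicity of $E_{i}$ satisfies $0\le f^{i}<1$ in the interior, $f^{i}\to 0$ at infinity along $E_{i}$, and $\int_{E_{i}}|\nabla f^{i}|^{2}<\infty$.

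Non-constancy of $u$ follows by comparing $u_{R}$ with $1-f^{1}_{R}$ on $E_{1}\cap B_{R}(p)$: both sides are harmonic, they coincide on $\partial B_{R}(p)\cap E_{1}$ (value $1$), and $u_{R}\ge 0=1-f^{1}_{R}$ on $\partial E_{1}$, so the maximum principle yields $u_{R}\ge 1-f^{1}_{R}$, whence $u\ge 1-f^{1}$ on $E_{1}$. A symmetric comparison on $E_{2}\cap B_{R}(p)$ gives $u\le f^{2}$ on $E_{2}$. Since $f^{i}\to 0$ at infinity along $E_{i}$, $u\to 1$ along $E_{1}$ and $u\to 0$ along $E_{2}$, so $u$ is non-constant. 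For the energy bound, define the continuous test function $\phi_{R}$ on $\overline{B_{R}(p)}$ by $\phi_{R}=1-f^{1}_{R}$ on $E_{1}\cap\overline{B_{R}(p)}$ and $\phi_{R}=0$ elsewhere (continuous across $\partial E_{1}$ because $1-f^{1}_{R}=0$ there); since $\phi_{R}$ matches $u_{R}$ on $\partial B_{R}(p)$, the variational characterization of $u_{R}$ yields
\begin{equation*}
\int_{B_{R}(p)}|\nabla u_{R}|^{2}\ \le\ \int_{B_{R}(p)}|\nabla\phi_{R}|^{2}\ =\ \int_{E_{1}\cap B_{R}(p)}|\nabla f^{1}_{R}|^{2}\ \le\ \int_{E_{1}}|\nabla f^{1}|^{2}<\infty.
\end{equation*}
Lower semicontinuity of the Dirichlet energy under the weak $W^{1,2}_{\mathrm{loc}}$ limit then gives $\int_{M}|\nabla u|^{2}<\infty$.

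The only subtle points are (a) ensuring $\phi_{R}$ is admissible as a variational competitor for $u_{R}$, which is handled by the piecewise construction above that is continuous on $\overline{B_{R}(p)}$ and matches the boundary data, and (b) knowing that the barrier $f^{i}$ has finite Dirichlet energy and is strictly less than $1$ in the interior — both are standard consequences of the definition of nonparabolicity and the monotonicity $f^{i}_{R}\nearrow f^{i}$ from the exhaustion construction. No curvature hypothesis is used; the result is purely potential-theoretic.
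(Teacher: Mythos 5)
Your construction is the standard Li--Tam scheme, which is exactly what the paper relies on (it defers the proof to \cite{chodosh-li-stryker}*{Lemma 4.3}), so the approach matches. The skeleton is right: the comparison inequalities $u\ge 1-f^{1}$ on $E_{1}$ and $u\le f^{2}$ on $E_{2}$ are correctly derived, and the competitor $\phi_R$ is admissible. But two of your supporting claims are not correct as stated and need repair. First, it is \emph{not} a consequence of nonparabolicity that the barrier $f^{i}\to 0$ at infinity along $E_{i}$ (consider an end obtained by gluing a half-cylinder $S^{2}\times[0,\infty)$ to $\mathbb{R}^{3}\setminus B_1$: the barrier tends to a positive constant along the cylindrical direction). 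What is true, and what you should use, is $\inf_{E_{i}}f^{i}=0$: if $a=\inf f^{i}>0$, then $(f^{i}-a)/(1-a)$ is a nonnegative harmonic function equal to $1$ on $\partial E_{i}$, hence dominates the minimal barrier $f^{i}$, which forces $a(f^{i}-1)\ge 0$ and hence $a=0$ since $f^{i}<1$ in the interior. This still gives non-constancy, via $\sup_{E_1}u\ge 1-\inf_{E_1}f^1=1$ and $\inf_{E_2}u\le\inf_{E_2}f^2=0$ together with $0\le u\le 1$; note that the weaker statement ``$f^i<1$ somewhere'' alone would \emph{not} rule out $u$ being a constant in $(0,1)$, so this point genuinely matters.

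Second, your energy inequality $\int_{E_{1}\cap B_{R}}|\nabla f^{1}_{R}|^{2}\le\int_{E_{1}}|\nabla f^{1}|^{2}$ goes the wrong way: extending $f^{1}_{R}$ by zero makes it a competitor for $f^{1}_{R'}$ when $R'>R$, so the truncated energies are \emph{decreasing} in $R$ and dominate the limit energy, not the other way around. The fix is immediate — use monotonicity to get the uniform bound $\int_{B_{R}}|\nabla u_{R}|^{2}\le\int_{E_{1}\cap B_{R_{0}}}|\nabla f^{1}_{R_{0}}|^{2}<\infty$ for all $R\ge R_{0}$ — after which lower semicontinuity finishes the argument as you say. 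With these two corrections the proof is complete and agrees with the cited one.
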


By Lemmas \ref{SYL} and \ref{harmonic}, choosing a standard cutoff function based on the distance function, we can deduce from \eqref{SY} that 
$\biRic_X(e,\nu)+\frac{5-n}{4}H^2\equiv 0$ and $|\nabla u|$ is a nonzero constant. This contradicts the finite energy condition of 
$u$ and Theorem \ref{infinitevol}. Therefore, 
$M$ has at most one nonparabolic end.

\section{CMC hypersurfaces in $5$ or $6$-manifolds}
Without loss of generality, we can assume that $M^n$ is simply-connected by looking at its universal cover. The parabolic ends of $M$ do not affect the construction of cut-off function on $M$ as we will see in the main proof (see also \cite{chodosh-li-stryker}[Remark 5.1]), thus we only need to deal with one nonparabolic end (it possibly does not exist). 

In this section, we adapt the argument from \cite{chodosh-li-stryker} to prove a volume bound that uses the theory of so-called balanced warped $\mu$-bubbles for stable CMC hypersurfaces; see also \cite{Gromov19,chodosh-li-bubble} for relevant statements. 

\subsection{Balanced $\mu$-bubbles}

We recall some preliminary results on existence and stability for balanced warped $\mu$-bubbles. Given a Riemannian $n$-manifold $(M^n,g)$ with boundary, $n\leqslant 7$, we assume that a choice of labeling the components of $\partial M$ is $\partial M=\partial_-M \cup \partial_+M$ where neither of them are empty. For a fixed smooth function $u>0$ on $M$ and a smooth function on $\mathring{M}$ with $h\to \pm\infty$ on $\partial_{\pm}M$, consider the following balanced warped area functional: 
\begin{equation*}
    \mA_{k}(\Omega):=\int_{\partial^{*}\Omega} u^k d\mH^{n-1}-\int_{M}\left(\chi_{\Omega}-\chi_{\Omega_0}\right)hu^k d\mH^n,\quad k>0,
\end{equation*}
for all Caccioppoli sets $\Omega$ with $\Omega\Delta\Omega_0\Subset \mathring{M}$, where the reference set $\Omega_0$ with smooth boundary satisfies 
\begin{equation*}
    \partial \Omega_0\subset \mathring{M}, \quad \partial_+M\subset \Omega_0. 
\end{equation*}
We will call $\Omega$ minimizing $\mA_{k}$ in this class a balanced warped $\mu$-bubble, or just $\mu$-bubble for short.

\begin{remark}
    $\mA_{k}(\Omega)$ is equivalent to the functional
    \begin{equation*}
        \widetilde{\mA}(\Omega)=\int_{\partial^{*}\Omega} d\mH^{n-1}_{\cg}-\int_{M}\left(\chi_{\Omega}-\chi_{\Omega_0}\right)hu^{\frac{-k}{n-1}} d\mH^n_{\cg},
    \end{equation*}
    under the conformal change $\cg=u^{\frac{2k}{n-1}} g$. The critical point of $\widetilde{\mA}$ is a hypersurface with the mean curvature $\tilde{H}=hu^{-\frac{k}{n-1}}$. This is equivalent to the statement in Lemma \ref{first}. We find that it is more convenient to work with $\mA_{k}(\Omega)$ directly instead of $\widetilde{\mA}(\Omega)$ under the conformal change. 
\end{remark}

\subsubsection{Existence of balanced $\mu$-bubbles}
The existence and regularity of a minimizer of $\mA_{k}$ among all Caccioppoli sets (without any equivarient assumptions) was first claimed by Gromov in \cite{Gromov19}, and was rigorously carried out by Zhu in \cite{Zhu-width-estimate}. We omit the detailed proof here and refer the readers to \cite{chodosh-li-bubble}.

%For the sake of completeness, we sketch the proof following the argument in \cite{chodosh-li-bubble}.

\begin{proposition}
There exists a smooth minimizer $\Omega$ for $\mA_{k}$ such that $\Omega\Delta\Omega_0$ is compactly contained in the interior in $M$.
\end{proposition}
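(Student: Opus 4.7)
The plan is to run the direct method of the calculus of variations in the class of Caccioppoli sets $\Omega$ with $\Omega\Delta\Omega_0\Subset\mathring M$, using the divergence of $h$ on $\partial M$ as a built-in barrier and the smooth positive weight $u$ to stay within the standard framework of sets of locally finite perimeter. First I would observe that $\mA_k$ is bounded below on any minimizing sequence whose symmetric differences $\Omega_i\Delta\Omega_0$ are contained in a fixed compact $K\subset\mathring M$: since $u^k$ is bounded above and below on $K$, the volume term $-\int(\chi_{\Omega_i}-\chi_{\Omega_0})hu^k$ is uniformly bounded by $\|h\|_{L^\infty(K)}\|u^k\|_{L^\infty(K)}\Vol_g(K)$, while the weighted perimeter is nonnegative; so $\inf\mA_k$ is finite and the perimeter of a minimizing sequence is uniformly bounded.

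The substantive point is the interior containment. I would argue that, for any admissible $\Omega$ whose reduced boundary enters a sufficiently thin collar $U_\varepsilon^-$ of $\partial_- M$, replacing $\Omega$ with $\Omega\setminus U_\varepsilon^-$ strictly decreases $\mA_k$. Indeed, on $U_\varepsilon^-$ one has $h\to-\infty$, so the volume term gained is $-\int_{U_\varepsilon^-\cap\Omega} h\,u^k\,d\mathcal H^n$, which is a large positive quantity of order $|h|\cdot\varepsilon\cdot|\partial_-M\cap\partial U_\varepsilon^-|$, while the perimeter change is at most a bounded multiple of $\mathcal H^{n-1}(\partial_- M\cap \partial U_\varepsilon^-)$ since $u$ is bounded. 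Taking $\varepsilon$ small then $|h|$ large, the volume gain dominates. The symmetric argument near $\partial_+ M$, where one appends a collar to $\Omega$, rules out approach to $\partial_+ M$. This shows every near-infimum configuration, after a bounded deformation, satisfies $\Omega\Delta\Omega_0\Subset K_0$ for a fixed compact $K_0\subset\mathring M$ depending only on $u,h$ and $\Omega_0$.

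With this uniform containment, BV compactness on the compact Riemannian domain $K_0$ extracts a subsequence $\Omega_i\to\Omega$ in $L^1$. Lower semicontinuity of the weighted perimeter under $L^1$-convergence, together with continuity of the volume functional in $L^1$ on $K_0$, gives that $\Omega$ realizes the infimum. Regularity follows from the conformal reformulation flagged in the remark: under $\widetilde g=u^{2k/(n-1)}g$, minimizers of $\mA_k$ become minimizers of the prescribed mean curvature functional $\widetilde{\mA}$ with smooth bounded prescribing function $\widetilde H=h u^{-k/(n-1)}$ on any compact subset of $\mathring M$. Standard Tamanini--De Giorgi--Federer almost-minimizer theory then yields that $\partial\Omega$ is a smooth hypersurface for $n\leq 7$, which transfers back to smoothness of $\partial\Omega$ in the original metric since the conformal factor is smooth and positive.

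The main obstacle is the barrier step making the compact containment quantitative; everything else is a standard application of the calculus of variations in $BV$ and the interior regularity theory for almost-minimizing boundaries. This is why the paper defers to \cite{chodosh-li-bubble} and \cite{Zhu-width-estimate}, where the analogous argument is carried out in detail; I would follow their strategy essentially verbatim, with the only cosmetic change being to keep the positive weight $u^k$ on both the perimeter and the volume term rather than absorbing it through the conformal change.
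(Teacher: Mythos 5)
Your proposal follows essentially the same route as the paper, which itself omits the detailed argument and defers to Zhu and Chodosh--Li: the direct method with a cut-and-paste barrier step near $\partial_{\pm}M$ exploiting $h\to\pm\infty$, BV compactness with lower semicontinuity of the weighted perimeter, and almost-minimizer regularity for $n\leq 7$ (via the conformal reformulation). The one imprecision is the collar estimate: the volume gain from deleting $\Omega\cap U_{\varepsilon}^-$ is of order $\inf_{U_{\varepsilon}^-}|h|\cdot\Vol(\Omega\cap U_{\varepsilon}^-)$, not $|h|\cdot\varepsilon\cdot\mH^{n-1}(\partial_-M\cap\partial U_{\varepsilon}^-)$, so a thin sliver with large cross-section could defeat a cut at a fixed collar level, and one must select the cutting level by the coarea formula (or argue via a maximum-principle barrier against the bounded-mean-curvature level sets of the distance function) --- exactly as in the references you already propose to follow verbatim.
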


\subsubsection{Stability}
We list the first and second variation for a balanced warped $\mu$-bubble without proof; see \cite{chodosh-li-bubble} for detailed calculations.

\begin{lemma}[The first variation]\label{first}
    If $\Omega_t$ is a smooth $1$-parameter family of regions with $\Omega_0=\Omega$ and the normal speed at $t=0$ is $\psi$, then
    \begin{equation*}
        \frac{d}{dt}\mA_{k}(\Omega)=\int_{\Sigma_t}\left(Hu^k+\lp\nabla^{M}u^k,\nu\rp-hu^k\right)\psi d\mH^{n-1}
    \end{equation*}
    where $\nu$ is the outwards pointing unit normal and $H$ is the scalar mean curvature of $\partial \Omega_t$. In particular, a $\mu$-bubble $\Omega$ satisfies
    \begin{equation*}
        H=-ku^{-1}\lp\nabla^{M}u,\nu\rp+h,
    \end{equation*}
    along $\partial\Omega$.
\end{lemma}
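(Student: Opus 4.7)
The plan is to compute the $t$-derivative of each of the two terms in
\[
\mA_{k}(\Omega_t)=\int_{\partial^{*}\Omega_t} u^k \, d\mH^{n-1}-\int_{M}\left(\chi_{\Omega_t}-\chi_{\Omega_0}\right)hu^k \, d\mH^n
\]
separately and then combine them. Both calculations are classical, so the work is mostly bookkeeping; the only conceptual point is that the weight $u^k$ depends only on position, not on $t$, so the variation acts purely through the motion of $\Sigma_t=\partial\Omega_t$ with normal speed $\psi$ along $\nu$.

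For the weighted perimeter term, I would use the standard first variation formula for a weighted hypersurface integral. If $f$ is any smooth function on $M$ and $\Sigma_t$ moves with normal velocity $\psi\nu$, then
\[
\frac{d}{dt}\bigg|_{t=0}\int_{\Sigma_t} f\, d\mH^{n-1}=\int_{\Sigma_0}\left(\lp\nabla^{M} f,\nu\rp + Hf\right)\psi \, d\mH^{n-1},
\]
where $H$ is the scalar mean curvature in the direction $\nu$. Applying this with $f=u^k$ (which has no explicit $t$-dependence) gives the contribution $\int_{\Sigma_t}\bigl(Hu^k+\lp\nabla^{M}u^k,\nu\rp\bigr)\psi$.

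For the bulk term, I would differentiate under the integral and note that $\frac{d}{dt}\chi_{\Omega_t}$ is a signed delta-function on $\Sigma_t$ weighted by the normal speed. Equivalently, by the coarea formula (or by writing $\Omega_t$ locally as $\{\phi(\cdot,t)<0\}$ and using the Reynolds transport theorem),
\[
\frac{d}{dt}\bigg|_{t=0}\int_{M}\left(\chi_{\Omega_t}-\chi_{\Omega_0}\right)h u^k\, d\mH^n=\int_{\Sigma_0} h u^k \,\psi\, d\mH^{n-1}.
\]
Subtracting this from the previous line produces the stated formula. Setting the integrand to zero for arbitrary $\psi$, using the chain rule $\nabla^M u^k=k u^{k-1}\nabla^M u$ and dividing by $u^k>0$, yields the Euler--Lagrange equation $H=-ku^{-1}\lp\nabla^{M}u,\nu\rp+h$ along $\partial\Omega$.

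The potential obstacle is purely a regularity issue: one must ensure the computation is justified at the full boundary $\partial\Omega$ of the minimizer. Since the preceding existence result guarantees that $\Omega\Delta\Omega_0$ is compactly contained in the interior of $M$ and $\Sigma_t=\partial\Omega_t$ is smooth (in dimensions $n\leq 7$), there is no singular set to worry about and all the manipulations above apply verbatim on the smooth part, which is all of $\Sigma_t$.
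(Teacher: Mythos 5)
Your computation is correct: the weighted first variation formula for the perimeter term, the transport/coarea argument for the bulk term, and the smoothness of the minimizer from the preceding existence proposition are exactly what is needed, and the Euler--Lagrange equation follows as you state. The paper itself gives no proof of this lemma, deferring to the calculation in the cited reference on warped $\mu$-bubbles, and your argument is essentially that same standard computation, so there is nothing to correct.
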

Denote $\Sigma=\partial\Omega.$
\begin{lemma}[The second variation]\label{second}
 If $\Omega_t$ is a smooth $1$-parameter family of regions with $\Omega_0=\Omega$ and the normal speed at $t=0$ is $\psi$, then
    \begin{equation*}
        \begin{split}
            \frac{d^2}{d t^2}\bigg|_{t=0}\mA_{k}(\Omega)&=\int_{\Sigma}|\nabla^{\Sigma}\psi|^2 u^k-\left(|A_{\Sigma}|^2+\Ric^M(\nu,\nu)\right)\psi^2 u^k\\
            &+\int_\Sigma k(k-1)u^{k-2}\left(\nabla_{\nu}u\right)^2\psi^2 \\
            &+\int_{\Sigma}ku^{k-1}\left(\Delta_M u-\Delta_{\Sigma} u\right)\psi^2-\left(\nabla_{\nu}(u^kh)\right)\psi^2.
        \end{split}
    \end{equation*}
\end{lemma}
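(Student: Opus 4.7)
The plan is to differentiate the first variation formula of Lemma \ref{first} once more in $t$ and then apply a single integration by parts. Writing $\Sigma_t=\partial\Omega_t$ with outward unit normal $\nu_t$ and scalar mean curvature $H_t$, the first variation reads
\begin{equation*}
    \frac{d}{dt}\mA_k(\Omega_t)=\int_{\Sigma_t}\bigl(H_t u^k+ku^{k-1}\nabla_{\nu_t}u-hu^k\bigr)\psi\,d\mathcal{H}^{n-1}.
\end{equation*}
Since $\Omega=\Omega_0$ is a critical point, the bracketed factor vanishes along $\Sigma:=\Sigma_0$, so the product rule collapses and it suffices to compute $\int_\Sigma u^k\psi\,\partial_t(H+ku^{-1}\nabla_\nu u-h)|_{t=0}\,d\mathcal{H}^{n-1}$, with $u^k\psi$ regarded as an undifferentiated weight.

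I would then compute the three infinitesimal variations at $t=0$. First, the standard formula $\partial_t H=-\Delta_\Sigma\psi-(|A_\Sigma|^2+\Ric^M(\nu,\nu))\psi$. Second, writing $\nabla_\nu u=\langle\nabla^M u,\nu\rangle$ and using $\partial_t\nu=-\nabla^\Sigma\psi$ together with the fact that the base point moves with velocity $\psi\nu$, one finds
\begin{equation*}
    \partial_t(\nabla_\nu u)=\psi\,\nabla^2 u(\nu,\nu)-\langle\nabla^\Sigma u,\nabla^\Sigma\psi\rangle.
\end{equation*}
Third, $\partial_t(h\circ x(t))=\psi\nabla_\nu h$. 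One must also carry the derivative of the prefactor $ku^{-1}$ along the flow, which produces $-ku^{-2}(\nabla_\nu u)^2\psi$.

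After integrating the Laplacian term by parts,
\begin{equation*}
    -\int_\Sigma u^k\psi\,\Delta_\Sigma\psi =\int_\Sigma u^k|\nabla^\Sigma\psi|^2 +\int_\Sigma ku^{k-1}\psi\,\langle\nabla^\Sigma u,\nabla^\Sigma\psi\rangle,
\end{equation*}
the second term cancels precisely against the tangential contribution $-ku^{k-1}\psi\langle\nabla^\Sigma u,\nabla^\Sigma\psi\rangle$ generated by $\partial_t(\nabla_\nu u)$. To convert $\nabla^2u(\nu,\nu)$ into the stated form, I would invoke the trace Gauss identity $\Delta_M u=\Delta_\Sigma u+\nabla^2 u(\nu,\nu)+H\nabla_\nu u$ and substitute the critical-point relation $H=h-ku^{-1}\nabla_\nu u$ from Lemma \ref{first}. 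This upgrades the coefficient of $(\nabla_\nu u)^2$ from $-k$ to the advertised $k(k-1)$, and pairs the leftover $-ku^{k-1}h\nabla_\nu u$ with $-u^k\nabla_\nu h$ into $-\nabla_\nu(u^kh)$ via the Leibniz rule, yielding the claimed expression.

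The main bookkeeping obstacle is the variation $\partial_t(\nabla_\nu u)$, which receives two competing contributions: one from translating the base point along $\psi\nu$, and one from the infinitesimal rotation $\partial_t\nu=-\nabla^\Sigma\psi$. It is precisely the tangential remainder $-\langle\nabla^\Sigma u,\nabla^\Sigma\psi\rangle$ that cancels the integration-by-parts remainder in the Laplacian term; without this cancellation one does not obtain a clean quadratic form in $\psi$, and the stated formula would not close up.
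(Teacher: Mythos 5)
Your derivation is correct and is essentially the same computation as the one the paper refers to (the warped $\mu$-bubble second variation in Chodosh--Li): differentiate the first variation, use criticality of $\Omega$ to kill the product-rule terms, apply the standard formulas for $\partial_t H$ and $\partial_t\nu$, and then convert $\nabla^2u(\nu,\nu)$ via $\Delta_M u=\Delta_\Sigma u+\nabla^2u(\nu,\nu)+H\nabla_\nu u$ together with $H=h-ku^{-1}\nabla_\nu u$, which is exactly what produces the $k(k-1)u^{k-2}(\nabla_\nu u)^2$ and $\nabla_\nu(u^kh)$ terms. The cancellation you highlight between the integration-by-parts remainder and the tangential part of $\partial_t(\nabla_\nu u)$ is the right bookkeeping, and the result matches the stated formula (the paper's ``$\int_M$'' in the middle line is a typo for $\int_\Sigma$, and the last line is missing a $\psi^2$ on the $ku^{k-1}(\Delta_Mu-\Delta_\Sigma u)$ term, as your computation shows).
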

\subsection{Volume bound for bubbles}
In this subsection, we are going to estimate the volume of $\mu$-bubbles constructed in stable two-sided CMC hypersurfaces. We first prove the following two technical propositions.
\begin{proposition}\label{inequalityforAsquare}
    Let $A=(a_{ij})$ be an $n\times n$ symmetric matrix with $n\geq 2$, and let $H=tr(A)$. Then, the following holds:
    \[k|A|^2-a_{11}a_{nn}-\sum_{i=1}^{n}a_{1i}^2-\sum_{i=2}^na_{in}^2+H(a_{11}+a_{nn})\geq \frac{2k^2+k-n+2}{2nk-3n+6}H^2\]
    for any 
    \begin{equation}\label{assumption for k and alpha=1}
    k\geq \max\left\{1,\frac{3(n-2)}{2n}\right\}.
    \end{equation}
    Equality holds if and only if
    \begin{equation*}
        \begin{split}
            a_{11}&=a_{nn}=-\frac{2(n-2k-2)}{2nk-3n+6}H,\\ a_{ii}&=\frac{2k-1}{2nk-3n+6}H, \quad  i\neq 1,n,\\
    a_{ij}&=0, \quad ~~\mbox{otherwise}~~.
        \end{split}
    \end{equation*}
\end{proposition}
\begin{proof}
Combining Cauchy--Schwarz inequality and the fact 
\[|A|^2\geq (a_{11}^2+a_{nn}^2)+\sum_{i=2}^{n-1}a_{ii}^2+2(\sum_{i=2}^na_{1i}^2+\sum_{i=2}^{n-1}a_{in}^2),\]
we have
\begin{align*}
    k|A|^2&-a_{11}a_{nn}-\sum_{i=1}^{n}a_{1i}^2-\sum_{i=2}^na_{in}^2+H(a_{11}+a_{nn})\\
    \geq &(k-1)(a_{11}^2+a_{nn}^2)-a_{11}a_{nn}+\frac{k}{n-2}(H-a_{11}-a_{nn})^2\\
    &+(2k-1)(\sum_{i=2}^na_{1i}^2+\sum_{i=2}^{n-1}a_{in}^2)+H(a_{11}+a_{nn})\\
    \geq &\left(\frac{k}{2}-\frac{3}{4}\right)(a_{11}+a_{nn})^2+\frac{k}{n-2}H^2-\frac{2kH}{n-2}(a_{11}+a_{nn})\\
    &+\frac{k}{n-2}(a_{11}+a_{nn})^2+H(a_{11}+a_{nn})\\
    \geq &\left(\frac{k}{2}-\frac{3}{4}+\frac{k}{n-2}\right)(a_{11}+a_{nn})^2+\left(1-\frac{2k}{n-2}\right)H(a_{11}+a_{nn})+\frac{k}{n-2}H^2.
\end{align*}
The conclusion now follows from elementary calculations. When equality holds, we have $a_{11}=a_{nn}$ and the diagonal entries $a_{ii}$ are all equal for $i=2,\cdots,n-1$.
\end{proof}

Notice that $\frac{3(n-2)}{2n}< 1$ when $n\leq 5$.  Thus, when applying Proposition \ref{inequalityforAsquare}, we can merely assume $k\geq 1$ when $n\leq 5.$ We have a more general result as follows.

\begin{proposition}\label{inequalityforAsquare1}
Let $A=(a_{ij})$ be an $n\times n$ symmetric matrix and denote $H=\tr(A)$. For any $k\geq 1/2$ and $\alpha\leq 2k$ satisfying
\begin{equation}\label{assumptions}
\begin{split}
     &k-\frac{(n-2)\alpha^2}{4k}>0,\\
        \quad k^2-&\frac{1}{4}-\frac{n-2}{4}\left(1+\alpha^2-\frac{\alpha}{k}\right)>0,
\end{split}
\end{equation}
there exists a positive constant $\delta=\delta(n,k,\alpha)$ such that \[I:=k|A|^2-\alpha a_{11}a_{nn}-\alpha\sum_{i=1}^{n-1}a_{1i}^2-\sum_{i=1}^na_{in}^2+H(\alpha a_{11}+a_{nn})\geq \frac{\delta}{n}H^2.\]

\end{proposition}
\begin{proof}
We consider the matrix
\begin{equation}\label{matrix_A} 
P=\begin{pmatrix} k & \frac{1}{2} & \frac{1}{2} & \cdots & \frac{1}{2} \\ \frac{1}{2} & k & \frac{\alpha}{2} & \cdots & \frac{\alpha}{2} \\ \frac{1}{2} & \frac{\alpha}{2} & k &\cdots & 0\\ \vdots & \vdots & \vdots & \ddots & \vdots\\ \frac{1}{2} & \frac{\alpha}{2} & 0 & \cdots & k \end{pmatrix}_{n\times n}.   
\end{equation} 
By elementary row and column operations, it is easy to see that the matrix $P$ is congruent to 
\begin{equation*}
    \begin{pmatrix}
         k-\frac{n-2)}{4k} & \frac{1}{2}\left(1-\frac{(n-2)\alpha}{2k}\right) & 0 & \cdots & 0 \\ \frac{1}{2}\left(1-\frac{(n-2)\alpha}{2k}\right)  & k-\frac{(n-2)\alpha^2}{4k} & 0 & \cdots & 0 \\ 0 & 0 & k &\cdots & 0\\ \vdots & \vdots & \vdots & \ddots & \vdots\\ 0 & 0 & 0 & \cdots & k
    \end{pmatrix}_{n\times n}.
\end{equation*}
Therefore, the positive definiteness of $P$ is equivalent to the positive definiteness of $Q$:
\begin{equation*}
    Q=\begin{pmatrix}
        k-\frac{n-2}{4k} & \frac{1}{2}\left(1-\frac{(n-2)\alpha}{2k}\right)  \\ \frac{1}{2}\left(1-\frac{(n-2)\alpha}{2k}\right)  & k-\frac{(n-2)\alpha^2}{4k}
    \end{pmatrix}
\end{equation*}
which requires 
\begin{equation}\label{two-eigenvalues}
    \begin{split}
       k&-\frac{(n-2)\alpha^2}{4k}>0\\
        \det Q=k^2-\frac{1}{4}-&\frac{n-2}{4}\left(1+\alpha^2-\frac{\alpha}{k}\right)>0.
    \end{split}
\end{equation}

Then we set \[x=\begin{pmatrix}a_{nn}\\ a_{11}\\ a_{22}\\ \vdots\\ a_{n-1,n-1}\\ \end{pmatrix}.\] By assumptions (\ref{assumptions}) and Cauchy--Schwarz inequality, we have \[I\geq x^TPx\geq \delta |x|^2\geq \frac{\delta}{n}H^2\] where $\delta$ is the smallest eigenvalue of $P$.

\end{proof}

We now work on the volume estimate of the warped $\mu$-bubble.
\begin{lemma}\label{diameterforclosedbubble}
Let $(X^{n+1},g)$ $(4\leq n\leq 5)$ be a complete $(n+1)$-dimensional Riemannian manifold, and let $M^n\hookrightarrow (X^{n+1},g)$ be a stable two-sided CMC hypersurface with compact boundary $\partial M$ and the mean curvature $H_M$.
Suppose that there exists $k,\alpha$ satisfying \eqref{assumption for k and alpha=1} or (\ref{assumptions}). Moreover, assume that $k,\alpha$ satisfy
\begin{equation}\label{assumption2}
    \frac{4}{(4-k)\alpha}\leq\frac{n-2}{n-3}, \quad G\geq 0,
\end{equation}
where the matrix $G$ is given in (\ref{cond2}).

Furthermore, if we have
\begin{equation}\label{lambda_k,alpha}
    \lambda_{k,\alpha}+C(k,n)H_M^2\geqslant 2\varepsilon_0>0,
\end{equation}
where $\lambda_{k,\alpha}$ is the infimum of the $(k,\alpha)$-triRicci curvature on $X^{n+1}$ and the constant $C(k,n)$ is given by
\begin{equation}\label{C(k,n)}
C(k,n):=\left\{\begin{array}{cc}
    \frac{2k^2+k-n+2}{2nk-3n+6}, & \alpha=1 \\
  \delta(k,n,\alpha),   & ~~\mbox{otherwise}~~,
\end{array}
\right. 
\end{equation}
then there exist universal constants $L>0$ and $c>0$ such that if $M$ contains a point $p$ satisfying $d_{M}(p,\partial M)>\frac{L}{2}$, then there exists a relatively open set $\Omega\subset B^M_{L/2}(\partial M)$ and a smooth closed $(n-1)$-dimensional Riemannian manifold $\Sigma^{n-1}$ (possibly disconnected) such that $\partial \Omega=\Sigma \cup \partial M$ and the volume of each component, still denoted by $\Sigma$, is bounded by $c$. 

\end{lemma}

\begin{remark}
In this remark, we discuss the admissible choice of $k$ and $\alpha$ that meets several restrictions in the proof of Lemma \ref{diameterforclosedbubble}.
\begin{itemize}
    \item[(1)] When $n=4$, we may choose $k=1$, $\alpha=1$. It satisfies the assumption \eqref{assumption for k and alpha=1}.
    %Direct calculations yields\begin{equation*}\begin{split} k-\frac{(n-2)\alpha^2}{4k}&=\frac{1}{2}>0,\\k^2-\frac{1}{4}-\frac{2}{4}\left(1+\alpha^2-\frac{\alpha}{k}\right)&=\frac{1}{4}>0,\end{split}\end{equation*}which verifies assumptions (\ref{assumptions}). 
For the matrix $G$ defined in (\ref{cond2}), we can take $\beta=1/3$ such that
\begin{equation*}
    \frac{1}{k}-\beta=\frac{2}{3}>0,\quad 1-\alpha \frac{n-2}{n-1}=\frac{1}{3}>0, \quad \det G=\frac{1}{36}>0,
\end{equation*}
i.e., $G$ is positive definite. Note that $\frac{4}{(4-k)\alpha}\leqslant 2=\frac{n-2}{n-3}$ which verifies \eqref{assumption2}. By \cite[Theorem 1]{antonelli-xu}, there exists a universal constant $c>0$ such that
\begin{equation*}
    \Vol(\Sigma)\leqslant c.
\end{equation*}
On the other hand, condition \eqref{lambda_k,alpha} becomes
\[C_3+\frac{1}{2}H_M^2\geq 2\varepsilon_0\] where $C_3$ is the $3$-intermediate curvature.

\item[(2)] When $n=5$, the choice of $k$ and $\alpha$ becomes very delicate. First, it is not hard to check that when $\alpha=1$, there does not exist $k\geq 1$ satisfying \eqref{assumption2} and \eqref{assumption for k and alpha=1}. Moreover, when $\alpha$ is slightly bigger than $1$, condition \eqref{assumptions}: 
\begin{equation*}
     \quad k^2+\frac{3\alpha}{4k}>\frac{3}{4}\alpha^2+1>\frac{3}{4}\alpha+1
\end{equation*}
requires $k>1$ but the condition $\det G>0$:
\begin{equation*}
    \frac{1}{k}>\frac{\beta\left((1-\frac{3\alpha}{4})(\frac{3\alpha}{16}-\frac{3}{4})-\frac{3\alpha^2}{64}\right)+\frac{1}{4}(1-\frac{3\alpha}{4})}{-\frac{3}{16}\alpha^2+\left(\frac{3}{16}-\frac{3}{4}(1-\beta)\right)\alpha-\frac{3}{16}+1-\beta}
\end{equation*}
requires $k<1$. In all, we may choose $\alpha=0.99$, $k=1.001$, and $\beta=0.001$. Direct calculations yield
\begin{equation*}
\begin{split}
    k-\frac{3}{4k}\alpha^2&=0.2667,\\
    k^2-\frac{1}{4}-\frac{3}{4}\left(1+\alpha^2-\frac{\alpha}{k}\right)&=0.0087,
    \end{split}
\end{equation*}
which verifies assumptions (\ref{assumptions}). Besides, we have
\begin{equation*}
    \frac{1}{k}-\beta=0.998>0, \quad 1-\alpha\frac{3}{4}=0.2575>0, \quad \det G=0.0017.
\end{equation*}
So $G$ is positive definite.  Moreover, $\frac{4}{(4-k)\alpha}=1.3472\leq\frac{3}{2}=\frac{n-2}{n-3}$ satisfies \eqref{assumption2}, we have by \cite[Theorem 1]{antonelli-xu} that
\begin{equation*}
    \Vol(\Sigma)\leqslant c.
\end{equation*}

\end{itemize}
\end{remark}

\begin{proof}
Let $\varphi_0$ be a smoothing of the function $d(\partial M, \cdot)$ so that $|\nabla \varphi_0|\leqslant 2$ and $\varphi_0\big|_{\partial M}\equiv 0$. Let $\epsilon\in (0,\frac{1}{2})$ so that $\epsilon$ and $\frac{2}{\sqrt{\beta\varepsilon_0}}\pi+2\epsilon$ are regular values of $\varphi_0$, where the positive constant $\beta$ will be determined later.

Define
\begin{equation*}
    \varphi=\frac{\varphi_0-\epsilon}{\frac{2}{\sqrt{\beta\varepsilon_0}}+\frac{\epsilon}{\pi}}-\frac{\pi}{2}.
\end{equation*}
$\Omega_1=\{x\in M:-\frac{\pi}{2}<\varphi<\frac{\pi}{2}\}$, and $\Omega_0=\{x\in \Sigma^{'}:-\frac{\pi}{2}<\varphi \leqslant 0\}$. Clearly $|\Lip(\varphi)|\leqslant \sqrt{\beta\varepsilon_0}$, and we define
\begin{equation*}
    h=-\sqrt{\frac{\varepsilon_0}{\beta}}\tan (\varphi).
\end{equation*}
We compute that
\begin{equation*}
    \nabla h=-\sqrt{\frac{\varepsilon_0}{\beta}}\left(1+\tan^2(\varphi)\right)\nabla \varphi=-\sqrt{\frac{\varepsilon_0}{\beta}}\left(1+\frac{\beta h^2}{\varepsilon_0}\right)\nabla \varphi,
\end{equation*}
so we have
\begin{equation*}
    |\nabla h|\leqslant \varepsilon_0\left(1+\frac{\beta h^2}{\varepsilon_0}\right).
\end{equation*}

We now consider the functional $\mA_{k}$ on $M^{n}$, where the weight function $u$ satisfies
 \begin{equation*}
 \begin{split}
     -\Delta_{M}u&=\left(|A_{M}|^2+\Ric^X(\eta, \eta)\right)u ~~\mbox{in}~~ \mathring{M},\\
     u&=1 ~~\mbox{on}~~ \partial M;
     \end{split}
 \end{equation*}
 see \cite{Wu23,chen-hong-li-docarmoquestion} for more details on the existence of $u$.
We assume that $\Omega$ is the minimizer of $\mA_{k}$ and $\Sigma$ is a connected component of $\partial \Omega\backslash \partial M$. Let $\nu$ be the unit normal vector to $\Sigma$.

By Lemma \ref{second}, we have
\begin{equation*}
 \begin{split}
     \int_{\Sigma}u^k |\nabla^{\Sigma}\psi|^2-ku^{k-1}\psi^2\Delta_{\Sigma}u\geqslant& \int_{\Sigma} \left(|A^{\Sigma}|^2+\Ric^M(\nu, \nu)-ku^{-1}\Delta_M u\right)u^k\psi^2\\
     &+\int_{\Sigma} \nabla_\nu(u^k h)\psi^2-k(k-1)u^{k-2}(\nabla_\nu u)^2\psi^2.
     \end{split}
 \end{equation*}
Plugging $\phi=u^{\frac{k}{2}}\psi$ into the above inequality yields
 \begin{align*}
     \int_\Sigma |\nabla\phi|^2+\frac{k^2}{4}u^{-2}&|\nabla u|^2\phi^2-ku^{-1}\phi\nabla\phi\cdot\nabla u-k\phi^2u^{-1}\Delta_\Sigma u\\ 
     \geq & \int_{\Sigma} \left(|A^{\Sigma}|^2+\Ric^M(\nu, \nu)-ku^{-1}\Delta_M u\right)\phi^2\\
     &+\int_{\Sigma} kh(\nabla_\nu \log u) \phi^2+\nabla_\nu h \phi^2-k(k-1)(\nabla_\nu \log u)^2\phi^2.
 \end{align*}
 Integration by parts yields
 \begin{align*}
     \int_\Sigma |\nabla\phi|^2&+\left(\frac{k^2}{4}-k\right)u^{-2}|\nabla u|^2\phi^2+ku^{-1}\phi\nabla\phi\cdot\nabla u\\ \geq & \int_{\Sigma} \left(|A^{\Sigma}|^2+\Ric^M(\nu, \nu)-ku^{-1}\Delta_M u\right)\phi^2\\
     &+\int_{\Sigma} kh(\nabla_\nu \log u) \phi^2+\nabla_\nu h \phi^2-k(k-1)(\nabla_\nu \log u)^2\phi^2.
 \end{align*}
 By Cauchy-Schwarz inequality, we obtain
  \begin{align*}
    \frac{4}{4-k} \int_\Sigma |\nabla\phi|^2&\geq  \int_{\Sigma} \left(|A^{\Sigma}|^2+\Ric^M(\nu, \nu)-ku^{-1}\Delta_M u\right)\phi^2\\
     &+\int_{\Sigma} kh(\nabla_\nu \log u) \phi^2+\nabla_\nu h \phi^2-k(k-1)(\nabla_\nu \log u)^2\phi^2.
 \end{align*}
Substituting $h$ by $H^{\Sigma}+k\nabla_\nu \log u$ yields
   \begin{align}\label{equation 11}
    \frac{4}{4-k} \int_\Sigma |\nabla\phi|^2\geq & \int_{\Sigma} \left(|A^{\Sigma}|^2+\Ric^M(\nu, \nu)-ku^{-1}\Delta_M u\right)\phi^2\\\nonumber
     &+\int_{\Sigma} k H^{\Sigma}(\nabla_\nu \log u) \phi^2+\nabla_\nu h \phi^2+k(\nabla_\nu \log u)^2\phi^2.
 \end{align}
 
 We next deal with the curvature term
 \begin{equation}\label{curvatureterm}
     -ku^{-1}\Delta_{M}u+|A^{\Sigma}|^2+\Ric^M(\nu, \nu).
 \end{equation}
We select a local orthonormal frame $\{e_i\}_{i=1}^{n-1}$ on $\Sigma^{n-1}$ and extend it by assigning $\nu=e_n$ and $\eta=e_{n+1}$ so that $\{e_1,\cdots,e_{n-1},\nu,\eta\}$ is a local orthonormal frame of $X^{n+1}$. By Gauss equation, we have
 \begin{equation*}
 \begin{split}
     \Ric^{\Sigma}(e_1,e_1)&=\sum_{i=1}^{n-1}\R^{\Sigma}_{i1i1}=\sum_{i=1}^{n-1}\left(\R^M_{i1i1}+A^{\Sigma}_{ii}A^{\Sigma}_{11}-\left(A^{\Sigma}_{i1}\right)^2\right)\\
     &=\sum_{i=1}^{n-1}\R^M_{i1i1}+H^{\Sigma}A^{\Sigma}_{11}-\sum_{i=1}^{n-1}\left(A^{\Sigma}_{i1}\right)^2\\
     &=\sum_{i=1}^{n-1}\R^X_{i1i1}+(H^M-A^M_{nn})A^M_{11}-\sum_{i=1}^{n-1}(A^M_{i1})^2+H^{\Sigma}A^{\Sigma}_{11}-\sum_{i=1}^{n-1}\left(A^{\Sigma}_{i1}\right)^2.
     \end{split}
 \end{equation*}
 On the other hand,
 \begin{align*}
     \Ric^M(\nu,\nu)&=\sum_{i=1}^n\left(R^X_{inin}+A^M_{nn}A_{ii}^M-A_{ni}^2\right)\\
     &=\sum_{i=1}^n R^X_{inin}+H^M A^M_{nn}-\sum_{i=1}^n (A_{in}^M)^2.
 \end{align*}
 Therefore, 
 \begin{equation*}
 \begin{split}
     -ku^{-1}&\Delta_{M}u+|A^{\Sigma}|^2+\Ric^M(\nu, \nu)+\alpha\Ric^\Sigma(e_1,e_1)\\
     =&k|A^M|^2+k\Ric^X(\eta,\eta)+\Ric^M(\nu,\nu)+|A^\Sigma|^2+\alpha\Ric^\Sigma(e_1,e_1)\\
     =&k\sum_{i=1}^{n+1}\R^X_{i(n+1)i(n+1)}+\sum_{i=1}^{n}\R^X_{inin}+\alpha\sum_{i=1}^{n-1}\R^X_{i1i1}\\
     &+k|A^M|^2+\alpha(H^{M}-A^M_{nn})A^M_{11}-\alpha\sum_{i=1}^{n-1}\left(A^{M}_{i1}\right)^2+H_MA^M_{nn}-\sum_{i=1}^{n}\left(A^{M}_{in}\right)^2\\
     &+|A^\Sigma|^2+ \alpha H^\Sigma A^\Sigma_{11}-\alpha\sum_{i=1}^{n-1}
(A^\Sigma_{i1})^2\\    
\geq& k\sum_{i=1}^{n+1}\R^X_{i(n+1)i(n+1)}+\sum_{i=1}^{n}\R^X_{inin}+\alpha\sum_{i=1}^{n-1}\R^X_{i1i1}\\
+& C(k,n)H_M^2
+|A^\Sigma|^2+ \alpha H^\Sigma A^\Sigma_{11}-\alpha\sum_{i=1}^{n-1}
(A^\Sigma_{i1})^2\
     \end{split}
 \end{equation*}
where the last inequality is due to
Proposition \ref{inequalityforAsquare} or Proposition \ref{inequalityforAsquare1} and $C(k,n)$ is defined in (\ref{C(k,n)}).

Now the inequality \eqref{equation 11} becomes
  \begin{align*}
    \frac{4}{4-k}& \int_\Sigma |\nabla\phi|^2\geq   \int_{\Sigma} (-\alpha\Ric^\Sigma(e_1,e_1))\phi^2\\
    &+\int_{\Sigma}\left(k\sum_{i=1}^{n+1}\R^X_{i(n+1)i(n+1)}+\sum_{i=1}^{n}\R^X_{inin}+\alpha\sum_{i=1}^{n-1}\R^X_{i1i1}+C(k,n)H_M^2 \right)\phi^2\\\nonumber
   &+ \int_\Sigma \left(|A_\Sigma|^2+ \alpha H^\Sigma A^\Sigma_{11}-\alpha\sum_{i=1}^{n-1}
(A^\Sigma_{i1})^2\right)\phi^2\\
     &+\int_{\Sigma} k H^\Sigma(\nabla_\nu \log u) \phi^2+\nabla_\nu h \phi^2+k(\nabla_\nu \log u)^2\phi^2.
 \end{align*}

We now estimate the following quantity 
 \[I:=|A^\Sigma|^2+ \alpha H^\Sigma A^\Sigma_{11}-\alpha\sum_{i=1}^{n-1}
(A^\Sigma_{i1})^2+kH^\Sigma(\nabla_\nu \log u)+\nabla_\nu h+ k(\nabla_\nu \log u)^2\]
and expect that $I\geq \beta h^2$ for some $\beta>0$.
In the same spirit in \cite[Page 11-12]{mazet}, it is equivalent to show that the matrix
\begin{equation}\label{cond2}
G=\begin{pmatrix}
    \frac{1}{n-1}+\frac{\alpha}{n-1}-\frac{\alpha}{(n-1)^2}+\frac{1}{k}-1 & \frac{\alpha}{2}\sqrt{\frac{n-2}{n-1}}(1-\frac{2}{n-1}) & \frac{1}{2}-\frac{1}{k} \\
    \frac{\alpha}{2}\sqrt{\frac{n-2}{n-1}}(1-\frac{2}{n-1}) & 1-\alpha\frac{n-2}{n-1} & 0\\
    \frac{1}{2}-\frac{1}{k} & 0 & \frac{1}{k}-\beta 
\end{pmatrix} 
\end{equation}
is positive semi-definite for some $k$, $\alpha$ and $\beta$. If so, by our assumptions, we obtain that
 \begin{align*}
    \frac{4}{4-k} \int_\Sigma |\nabla\phi|^2&\geq  \int_{\Sigma} (-\alpha\Ric^\Sigma(e_1,e_1)+2\varepsilon_0+\beta h^2+\nabla_\nu h)\phi^2.
 \end{align*}

According to the definition of $h$ and choosing $e_1$ realizing the minimum Ricci curvature $\lambda_{\Ric}(\Sigma)=\Ric_{\Sigma}(e_1,e_1)$, we get
\begin{equation*}
\begin{split}
    \frac{4}{4-k}\int_{\Sigma}|\nabla^{\Sigma}\phi|^2  \geqslant \int_{\Sigma}\left(- \alpha\lambda_{\Ric}(\Sigma)+\varepsilon_0\right)\phi^2.
    \end{split}
\end{equation*}
Therefore, there is a positive smooth function $\theta\in C^{\infty}(\Sigma)$ so that
\begin{equation*}
    \gamma\Delta_\Sigma \theta\leq \lambda_{\Ric} \theta-\frac{\varepsilon_0}{\alpha}\theta, \quad \gamma=\frac{4}{(4-k)\alpha}.
\end{equation*}
We need 
\[\gamma\leq \frac{n-2}{n-3}\]
to hold so that we can apply \cite[Theorem 1]{antonelli-xu} to conclude that there exists a constant $c$ such that $\Vol(\Sigma)\leq c.$

\end{proof}

\subsection{Almost linear volume growth} 
In this part, we will upgrade the volume estimate of $\mu$-bubbles to the volume growth of ends in stable CMC hypersurfaces under several curvature conditions.

\begin{theorem}\label{volumeforslice}
    Let $(X^{n+1},g)$ $(4\leq n\leq 5)$ be a complete $(n+1)$-dimensional Riemannian manifold, and let $M^n\hookrightarrow (X^{n+1},g)$ be a simply-connected stable two-sided CMC hypersurface. Let $\{E_l\}_{l\in\bN}$ be an end of $M^n$ adapted to $p\in M$ with length scale $L$, where $L$ is the constant from Lemma \ref{diameterforclosedbubble}. Denote $M_l:=E_l\cap B_{(l+1)L}(p)$. Assume that either $X$ has bounded geometry or $X$ has weakly bounded geometry and $\Ric^X_{n-1}\geq C_2$ for a constant $C_2$. Under assumptions in Lemma \ref{diameterforclosedbubble}, there is a universal constant $C>0$ independent of $l$ such that
    \begin{equation*}
        \Vol(M_l)<C
    \end{equation*}
    for all $l$ sufficiently large.
\end{theorem}

 % \begin{equation*} \lambda_k+\frac{2k^2+k-2}{8k-6}H_M^2\geqslant 2\varepsilon_0\end{equation*} for a positive constant $\varepsilon_0$ and some $k\in [\frac{3}{2},2]$, where $\lambda_k$ is the infimum of the $k$-triRicci curvature.  
 
\begin{proof}
We apply Lemma \ref{diameterforclosedbubble} to the end $E_l$, which supplies an open set $\Omega_l$ in $B_{L/2}(\partial E_l)\cap E_l$ with $\partial E_l\subset \partial \Omega_l$. Note that $\Omega_l\subset M_l$ and $\partial E_{l+1}\cap \overline{\Omega_l}=\emptyset$ by construction. 

As claimed in \cite[Lemma 5.4]{chodosh-li-stryker}, $\overline{M_l}$ is connected and there exists a component $\Sigma^{*}_l$ of the $\mu$-bubble $\Sigma_l:=\partial \Omega_l\backslash \partial E_l$ separating $\partial E_l$ and $\partial E_{l+1}$. In fact, if not, we can find a non-contractible closed curve, contradicting the simply-connectedness of $M^n$. By Lemma \ref{diameterforclosedbubble}, we have $\Vol(\Sigma_l^{*})\leqslant c$ where $c$ is independent of $l.$

We now claim that there is a constant $D>0$ independent of $l$ such that $\Vol(M_l)<D$. It can be deduced by using the area comparison theorem of hypersurfaces.  This idea was firstly applied by Wei-Xu-Zhang in \cite{Wei_Xu_Zhang}. Due to the fact that $\Sigma_{l-1}^{*}$ separates $p$ and $E_l$, every hypersurface $\partial B_s(p)\cap M_l$ for $s\in [lL,(l+1)L]$ lies behind $\Sigma_{l-1}^{*}$, which means any geodesic starting from $p$ to $\partial B_s(p)\cap M_l$ goes through $\Sigma_{l-1}^{*}$. 

It follows from Lemma \ref{estimate} and Lemma \ref{Ric-lower} that $\Ric^M\geq -Cg$. Then by Bishop--Gromov volume comparison theorem, we have
    \[\Vol(\partial B_s(p)\cap M_l)\leq C\frac{e^{(n-1)s}}{e^{(n-1)(l-1)L}}\Vol(\Sigma_{l-1}^{*})\leq Ce^{2(n-1)L}\]
    for any $s\in [lL,(l+1)L]$. Integrating and using co-area formula yield
    \[\Vol(M_l)\leq CLe^{2(n-1)L}.\]
\end{proof}

\section{Proof of main results}\label{proof-of-main-results}
We are going to prove a more general result than ones stated in the introduction. For clarity, we state our result and divide proofs into different cases.

\subsection{Case 1: $n=4$}
%\begin{theorem}    Let $(X^{5},g)$ be a complete manifold and let $M^4\hookrightarrow (X^{5},g)$ be a complete two-sided weakly stable minimal immersion. 

%$(1)$ If $\Ric^X\geq 0$, $\biRic_X\geq 0$, $X$ has bounded geometry and $\lambda_k\geq 2\varepsilon_0$ for a positive constant $\varepsilon_0$ and some $k\in[1,2]$.  Then $M$ is parabolic with at most two ends. In particular, $M$ is totally geodesic and $\Ric^M(\nu,\nu)=0$ along $M$.

%$(2)$ If $\Ric^X_{3} \geq 0$, $X$ has weakly bounded geometry and $\lambda_k\geq 2\varepsilon_0$ for a positive constant $\varepsilon_0$ and some $k\in[1,2]$.  Then $M$ is parabolic with at most two ends. In particular, $M$ is totally geodesic and $\Ric^M(\nu,\nu)=0$ along $M$.\end{theorem}

\begin{theorem} \label{maintheorem5.1}   Let $(X^{5},g)$ be a complete manifold and let $M^4\hookrightarrow (X^{5},g)$ be a complete noncompact stable CMC immersion with  mean curvature $H$. 

    $(1)$ If $\Ric^X+\frac{1}{4}H^2\geq 0$, $\biRic_X+\frac{1}{4}H^2\geq 0$, $X$ has bounded geometry, and $\lambda_k+\frac{2k^2+k-2}{8k-6}H^2\geq 2\varepsilon_0$ for a positive constant $\varepsilon_0$ and some $k\in[1,2]$.  Then $M$ is parabolic with at most two ends. In particular, $M$ is umbillic and $\Ric^X(\nu,\nu)+\frac{1}{4}H^2=0$ along $M$.

    $(2)$ If $\Ric^X_{3}+\frac{3}{28}H^2 \geq 0$, $X$ has weakly bounded geometry, and $\lambda_k+\frac{2k^2+k-2}{8k-6}H^2\geq 2\varepsilon_0$ for a positive constant $\varepsilon_0$ and some $k\in[1,2]$.  Then $M$ is parabolic with at most two ends. In particular, $M$ is umbillic and $\Ric^X(\nu,\nu)+\frac{1}{4}H^2=0$ along $M$.
\end{theorem}

\begin{remark}\label{curvaturerelations}
    Note that $\Ric^X_{3}+\frac{3}{28}H^2 \geq 0$ implies $\Ric^X+\frac{1}{4}H^2\geq 0$ and $\biRic_X+\frac{1}{4}H^2\geq 0$.
\end{remark}
\begin{proof}
First, observe that the universal cover of 
$M$ is stable, so without loss of generality, we may assume that 
$M$ is simply connected. The theorem then follows by constructing suitable compactly supported test functions. To achieve this, we decompose 
$M$ into several geometric building blocks, following the approach in \cite{chodosh-li-stryker}. Let 
$M^n$ be a complete, simply connected Riemannian 
$n$-manifold.

\textbf{\textbf{(1):}}  It follows from Theorem \ref{oneend1} that $M$ has at most two ends when $\biRic_X+\frac{1}{4}H^2\geq 0$. Furthermore, if there exist more than two ends, $M$ is a product manifold  with two ends. In the sequel, we only need to address the one end case. We denote $\{E_l\}$ the unique end of $M$ adapted to $p\in M$ with length scale $L$.

We now demonstrate the decomposition of $M$. For fixed $l_0\geqslant 1$, we set
\begin{itemize}
    \item $M_l:=E_l\cap B_{(l+1)L}(p)$ for all $l$.
    \item $\{P^{(\alpha)}_{l_0}\}_{\alpha=1}^{n_{l_0}}$ are the components of $M\backslash \oB_{l_0L}(p)$ besides $E_{l_0}$.
    \item $\{P^{(\alpha)}_{l}\}_{\alpha=1}^{n_{l}}$ are the components of $E_{l-1}\backslash \oB_{lL}(p)$ besides $E_{l}$ for $l>l_0$.
    
\end{itemize}
By the uniqueness of the end, all components $P_l^{(\alpha)}$ are bounded. It also should be noted that the number $n_l$ of components $P_l^{(\alpha)}$ may be unbounded as $l$ tends to infinity. For fixed $i \geq 1$, we have the decomposition (see Figure 1)
\begin{equation*}
    M=\oB_{l_0L}(p)\cup\bigcup_{l=l_0}^{l_0+i-1}\left(\oM_l\cup\bigcup_{\alpha=1}^{n_l}\oP_l^{(\alpha)}\right)\cup \left(\oE_{l_0+i-1}\backslash B_{(l_0+i)L}(p)\right).
\end{equation*}

We now construct the desired cutoff function. For each $l$, let $\rho_l$ be a smooth function on $\oM_l$ such that $|\nabla \rho_l|\leqslant 2$,
    \begin{equation*}
        \rho_l\big|_{\partial E_l}\equiv lL, \quad \rho_l\big|_{\partial M_l\backslash \partial E_l}\equiv (l+1)L.
    \end{equation*}
    For instance, we can take $\rho_l$ to be a smoothing of the distance function from $p$. Let $\phi:[l_0L,(l_0+i)L]\to [0,1]$ be the linear function defined by
    \[\phi(t)=1+\frac{1}{i}(l_0-\frac{t}{L}).\] Define a compactly supported Lipschitz function on $M$ as follows:
    \[\varphi(x)=\begin{cases}
        1 \ \ \ \ \ \ \ \ \ \ & x \in \overline{B}_{l_0L}(p),\\
        \phi(\rho_l(x)) & x\in  \overline{M_l}, \ \ \l_0\leq l<l_0+i,\\
        \phi(lL)  & x\in \oP_l^{(\alpha)}, \ \ l_0\leq l<l_0+i,\\
        0 &  \text{otherwise}.
    \end{cases}
    \]

For any positive superharmonic function $\omega$ on $M$, we define $\tau=\log \omega$ which satisfies $\Delta \tau\leq -|\nabla\tau|^2.$ Integration by parts and Cauchy-Schwarz inequality yield
\begin{align*}
    \int_M|\nabla\tau|^2\varphi^2&\leq -\int_M \varphi^2\Delta \tau\\
    &=\int_M2\varphi\nabla\varphi\cdot\nabla\tau\\
    &\leq \delta\int_M|\nabla\varphi|^2+\frac{1}{\delta}\int_M|\nabla\tau|^2\varphi^2.
\end{align*}
Choosing $\delta>1$, we have
\begin{align*}
    \int_M|\nabla\tau|^2\varphi^2&\leq C\int_M|\nabla\varphi|^2\\
    &\leq C\sum_{l=l_0}^{l_0+i-1}\int_{\bar{M}_l}\frac{1}{(iL)^2}|\nabla \rho_l|^2\\
    &\leq C\frac{1}{iL^2}\to 0, \quad i \to 0.
\end{align*}
Here we have used the volume estimate in Theorem \ref{volumeforslice} in the last inequality. Thus, both $\tau$ and $\omega$ are constant functions, and $M$ is parabolic.

Then, by the stability of $M$, there is a positive superharmonic function $\omega$ on $M$ satisfying
\[\Delta \omega=-|A|^2\omega-\Ric^X(\nu,\nu)\omega\leq -\frac{1}{4}H^2\omega-\Ric^X(\nu,\nu)\omega\leq 0.\]

By previous discussion, we conclude that $M$ is umbillic and has $\Ric^X(\nu,\nu)+\frac{1}{4}H^2=0$ along $M.$

\begin{figure}[h]

\includegraphics[width=13cm]{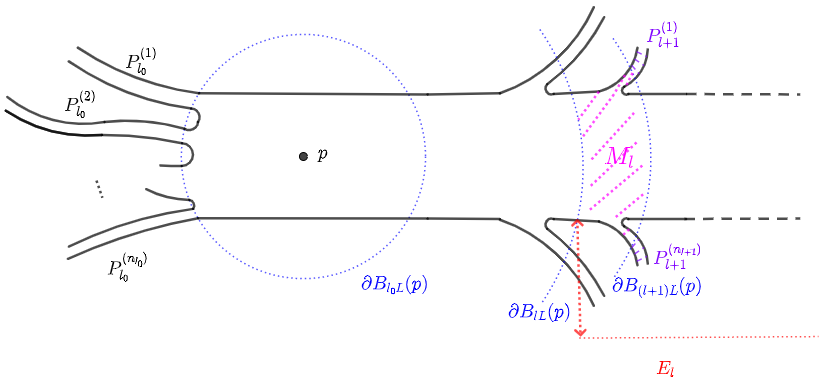}
\caption{Decomposition of $M$}
\end{figure}
\vskip.2cm

\textbf{\textbf{(2):}}
By Remark \ref{curvaturerelations} and Theorem \ref{oneend1}, $ M$ has at most two ends. Then the proof is exactly same as the one in \textbf{\textbf{(1)}}. Notice that we can apply Lemma \ref{diameterforclosedbubble} and Theorem \ref{volumeforslice} under the assumptions of \textbf{\textbf{(2)}}.

\end{proof}

\subsection{\textbf{Case 2:} $n=5$}\label{n=5}

\begin{theorem}\label{maintheorem5.3}    Let $(X^{6},g)$ be a complete manifold and let $M^5\hookrightarrow (X^{6},g)$ be a complete noncompact stable CMC immersion with  mean curvature $H$. 

    $(1)$ If $\Ric^X+\frac{1}{5}H^2\geq 0$, $\biRic_X\geq 0$, $X$ has bounded geometry, and $\lambda_{(0.99,1.001)}+\delta(0.99,5,1.001)H^2\geq 2\varepsilon_0$ for a positive constant $\varepsilon_0$ where $\delta$ is taken from Proposition \ref{inequalityforAsquare1}.  Then $M$ is parabolic. In particular, $M$ is umbillic and $\Ric^X(\nu,\nu)+\frac{1}{5}H^2=0$ along $M$.

    $(2)$ If $\Ric^X_{4}\geq 0$, $H=0$, $X$ has weakly bounded geometry, and $\lambda_{(0.99,1.001)}\geq 2\varepsilon_0$ for a positive constant $\varepsilon_0$.  Then $M$ is parabolic. In particular, $M$ is totally geodesic and $\Ric^X(\nu,\nu)=0$ along $M$.
\end{theorem}

\begin{proof}

\textbf{(1) and (2):} Under current assumptions, there is no priori upper bound on the number of ends. Fortunately, by Theorem \ref{oneend3}, we know that the number of nonparabolic end is at most one if either $\biRic_X\geq 0$ and $X$ has bounded geometry or $\Ric^X_{4}\geq 0$ and $M$ is minimal.

Then the rest of proof is basically same as that of Theorem \ref{maintheorem5.1} \textbf{(1)}, except
 that the component $\oP_k^{(\alpha)}$ now is not necessarily bounded. On each unbounded parabolic component $\oP_k^{(\alpha)}$, the modified test function is given by the following well-known fact:  
for any $l$ and $\alpha\in [1,n_l]$ and for any small $\epsilon>0$, there exists a positive compactly supported Lipschitz function $u_{l,\alpha}$ on $P_l^{(\alpha)}$ such that
\begin{equation}\label{functionattachtoparabolicends}
    u_{l,\alpha}|_{\partial P_l^{(\alpha)}}=1 \ \text{and}\ \ \int_{P_l^{(\alpha)}}|\nabla u_{l,\alpha}|^2<\epsilon.
\end{equation}
This fact follows from the definition of parabolic ends (see \cite{peterwangspectrum} or \cite[Proposition 3.4]{chodosh-li-stryker} for more details). Thus on $\oP_l^{(\alpha)}$, we now define $\varphi:=\phi(lL)u_{l,\alpha}$. Besides, on bounded components, we set $\varphi:=\phi(lL)$ as we previously did. 

We can choose $\epsilon$ sufficiently small in (\ref{functionattachtoparabolicends}) such that the energy of all parabolic ends on a compact subset in $M$ is sufficiently small. Moreover, under current assumptions, we can apply Lemma \ref{diameterforclosedbubble} and Theorem \ref{volumeforslice}.

\end{proof}

%\begin{theorem}    Let $(X^{5},g)$ be a complete manifold and let $M^4\hookrightarrow (X^{5},g)$ be a complete weakly stable CMC immersion with nonzero mean curvature $H$. 

% $(1)$ If $\Ric^X+\frac{1}{4}H^2\geq 0$, $\biRic_X+\frac{1}{4}H^2\geq 0$, $X$ has bounded geometry and $\lambda_k+\frac{2k^2+k-2}{8k-6}H^2\geq 2\varepsilon_0$ for a positive constant $\varepsilon_0$ and some $k\in[1,2]$.  Then $M$ is parabolic with at most two ends. In particular, $M$ is totally geodesic and $\Ric^M(\nu,\nu)=0$ along $M$.

% $(2)$ If $\Ric^X_{3} \geq 0$, $X$ has weakly bounded geometry and $\lambda_k+\frac{2k^2+k-2}{8k-6}H^2\geq 2\varepsilon_0$ for a positive constant $\varepsilon_0$ and some $k\in[1,2]$.  Then $M$ is parabolic. In particular, $M$ is totally geodesic and $\Ric^M(\nu,\nu)=0$ along $M$.\end{theorem}

\appendix
\section{Example}
We provide a (in fact, a family of) closed $5$-dimensional Riemannian manifold $(X^5,g)$ with bounded geometry, satisfying
    \[\operatorname{Ric}_X\geq 0,\ \, \biRic_X\geq 0,\ \, \text{and}\ \ 1\operatorname{-triRic}_X\geq \delta>0, \] while $\Ric_3<0$ somewhere. 

Consider the Berger sphere: $\bS^5=\oG/\oH$, where $\oG=SU(3)\times \bS^1$ and $\oH=SU(2)\times \bS^1$. We follow the notation in \cite{Ziller-Bergersphere}. For the bi-invariant metric $\lp X,Y\rp=-\frac{1}{2}\tr (XY)$, $\kg=\kh\oplus\kp$ is an $\Ad(H)$ invariant splitting and $\kp=\kh^{\perp}$ has an orthonormal basis: for $r\in\{1,2\}$,
\begin{equation*}
    A=\frac{i}{\sqrt{3}}\left(E_{1,1}+E_{2,2}-2E_{3,3}\right),\quad e_r=E_{r,3}-E_{3,r}, \quad f_r=i\left(E_{r,3}+E_{3,r}\right).
\end{equation*}
Moreover, $\kp=\kp_1\oplus \kp_2$, where
\begin{equation*}
    \kp_1:=\vspan \{e_r,f_r\}, \quad \kp_2=\bR\cdot A.
\end{equation*}
Let $D$ be a basis of the Lie algebra of $\bS^1$. By \cite{Ziller-Bergersphere}[Theorem 3], the metric $g_s$ defined by
\begin{equation*}
    g_s:=g|_{\kp_1\times\kp_1}+s^2\cdot g|_{\kp_2\times\kp_2}
\end{equation*}
is naturally reductive with respect to the decomposition $\overline{\kg}=\kh\oplus \overline{\kp}_s$, where
\begin{equation*}
    \overline{\kp}_s:=\vspan \{s^2A+(s^2-1)D, e_r,f_r\}.
\end{equation*}
We will abbreviate
\begin{equation*}
    d_s=\frac{s^2A+(s^2-1)D}{s}.
\end{equation*}
Thus $\{d_s, e_r, f_r\}$, $r\in\{1,2\}$, is a $g_s$-orthonormal basis of $\overline{\kp}_s$. Since $\Ad(\oH)$ maps any unit vector into a vector
\begin{equation*}
    v_{\alpha}:=\cos \alpha d_s+\sin \alpha e_1,
\end{equation*}
without loss of generality, we can restrict ourselves to $v=v_{\alpha}$. For convenience, we set $\ove_1=\sin \alpha d_s-\cos \alpha e_1$.

Following the calculation in \cite{Ziller-Bergersphere}, we list sectional curvatures of $(\bS^5, g_s)$:
\begin{equation*}
\begin{split}
    \sec_{g_s}(v, \ove_1)=\frac{3}{4}s^2, \quad & \sec_{g_s}(v, e_2)=\sin^2 \alpha+\frac{3s^2}{4}\cos^2\alpha,\\
    \sec_{g_s}(v, f_1)=\frac{3}{4}s^2+(4-3s^2)\sin^2 \alpha, \quad& \sec_{g_s}(v, f_2)=\sin^2 \alpha+\frac{3s^2}{4}\cos^2\alpha,\\
    \sec_{g_s}(\ove_1,e_2)=\cos^2 \alpha+\frac{3s^2}{4}\sin^2\alpha, \quad & \sec_{g_s}(\ove_1,f_1)=\frac{3}{4}s^2+(4-3s^2)\cos^2 \alpha,\\
    \sec_{g_s}(\ove_1,f_2)=\cos^2 \alpha+\frac{3s^2}{4}\sin^2\alpha, \quad & \sec_{g_s}(e_2,f_1)=1,\\
    \sec_{g_s}(e_2,f_2)=4-\frac{9}{4}s^2, \quad & \sec_{g_s}(f_1,f_2)=1.
    \end{split}
\end{equation*}
%Choosing $s^2=3$, sectional curvatures are given by
%\begin{equation*}
%\begin{split}
%&\sec_{g_s}
%\begin{bmatrix}
 %   0 & (v, \ove_1) & (v, e_2) & (v, f_1) & (v, f_2) \\
  %  (\ove_1, v) & 0 & (\ove_1, e_2) & (\ove_1, f_1) & (\ove_1, f_2)\\
  %  (e_2, v)  & (e_2, \ove_1) & 0 & (e_2, f_1) & (e_2, f_2)\\
   % (f_1, v)  & (f_1, \ove_1) & (f_1, e_2) & 0& (f_1, f_2)\\
    %(f_2, v)  & (f_2, \ove_1) & (f_2, e_2) & (f_2, f_1) & 0
%\end{bmatrix}\\
%&=\begin{bmatrix}
 %   0 & \frac{3}{4}s^2 & \sin^2 \alpha+\frac{3s^2}{4}\cos^2\alpha & \frac{3}{4}s^2+(4-3s^2)\sin^2 \alpha & \sin^2 \alpha+\frac{3s^2}{4}\cos^2\alpha \\
    %\frac{3}{4}s^2 & 0 & \cos^2 \alpha+\frac{3s^2}{4}\sin^2\alpha & \frac{3}{4}s^2+(4-3s^2)\cos^2 \alpha  & \cos^2 \alpha+\frac{3s^2}{4}\sin^2\alpha\\
   %\sin^2 \alpha+\frac{3s^2}{4}\cos^2\alpha  & \cos^2 \alpha+\frac{3s^2}{4}\sin^2\alpha & 0 & 1 & 4-\frac{9}{4}s^2\\
    %\frac{3}{4}s^2+(4-3s^2)\sin^2 \alpha  & \frac{3}{4}s^2+(4-3s^2)\cos^2 \alpha & 1 & 0& 1\\
    %\sin^2 \alpha+\frac{3s^2}{4}\cos^2\alpha  & \cos^2 \alpha+\frac{3s^2}{4}\sin^2\alpha & 4-\frac{9}{4}s^2 & 1 & 0
%\end{bmatrix}.
%\end{split}
%\end{equation*}

%\begin{equation*}
%\begin{split}
 %   K(d_s, e_r)&=K(d_s, f_r)=\frac{9}{4}, \quad r\in\{1,2\}\\
  %  K(e_1,e_2)&=K(f_1,f_2)=K(e_1,f_2)=K(e_2,f_1)=1\\
   % K(e_1,f_1)&=K(e_2,f_2)=-\frac{11}{4}.
    %\end{split}
%\end{equation*}
Elementary calculation yields
\begin{equation*}
    \begin{split}
        \Ric_{g_s}(v,v)&=3s^2+\left(6-\frac{9}{2}s^2\right)\sin^2 \alpha, \\
        \Ric_{g_s}(\ove_1,\ove_1)&=6-\frac{3}{2}s^2+\left(\frac{9}{2}s^2-6\right)\sin^2 \alpha\\
        \Ric_{g_s}(e_2,e_2)=\Ric_{g_s}(f_1,f_1)&=\Ric_{g_s}(f_2,f_2)=6-\frac{3}{2}s^2,\\
        \Ric_{g_s}& \equiv 0,  ~~\mbox{otherwise.}~~
    \end{split}
\end{equation*}
When $s^2\in (16/9,4]$, we have
\begin{equation*}
\inf \sec_{g_s}=4-\frac{9}{4}s^2<0, \quad     \inf \Ric_{g_s}=6-\frac{3}{2}s^2\geq 0.
\end{equation*}
%Note that in general,
%\begin{equation*}\Ric_{g_s}(v,v)\leq \Ric_{g_s}(e_2,e_2)=\Ric_{g_s}(f_1,f_1)=\Ric_{g_s}(f_2,f_2)\leq \Ric_{g_s}(\ove_1,\ove_1).
%\end{equation*}

%\begin{equation*}
 %\Ric_{g_s}=
 %\begin{bmatrix}
  %   3s^2+\left(6-\frac{9}{2}s^2\right)\sin^2 \alpha & 0 & 0 & 0 & 0 \\
   % 0 & 6-\frac{3}{2}s^2+\left(\frac{9}{2}s^2-6\right)\sin^2 \alpha & 0 & 0 & 0\\
    %0  & 0 & 6-\frac{3}{2}s^2 & 0 & 0\\
    %0  & 0 & 0 & 6-\frac{3}{2}s^2 & 0\\
   % 0  & 0 & 0 & 0 & 6-\frac{3}{2}s^2
 %\end{bmatrix},
% \Ric(d_s,d_s)=9, \quad \Ric(e_r,e_r)=\Ric(f_r,f_r)=\frac{3}{2}, \quad r\in \{1,2\};
%\end{equation*}
Next, we list all the bi-Ricci curvatures:
\begin{equation*}
    \begin{split}
        \biRic_{g_s}(v,\ove_1)&=6+\frac{3}{4}s^2,\quad \biRic_{g_s}(v,f_1)=6+\frac{3}{4}s^2+\left(2-\frac{3}{2}s^2\right)\sin^2 \alpha,\\
        \biRic_{g_s}(v,e_2)&=\biRic_{g_s}(v,f_2)=6+\frac{3}{4}s^2+\left(5-\frac{15}{4}s^2\right)\sin^2 \alpha,\\
        \biRic_{g_s}(\ove_1,e_2)&=\biRic_{g_s}(\ove_1,f_2)=11-3s^2+\left(-5+\frac{15}{4}s^2\right)\sin^2 \alpha,\\
        \biRic_{g_s}(\ove_1,f_1)&=8-\frac{3}{4}s^2+\left(-2+\frac{3}{2}s^2\right)\sin^2 \alpha,\quad \biRic_{g_s}(e_2,f_1)=11-3s^2,\\
        \biRic_{g_s}(e_2,f_2)&=8-\frac{3}{4}s^2,\quad 
         \biRic_{g_s}(f_1,f_2)=11-3s^2.
    \end{split}
\end{equation*}
Therefore, when $s^2\in (16/9,11/3]$, we have $\biRic \geq 0$.

%\begin{equation*}
%\biRic_{g_s}=
% \begin{bmatrix}
 %    18 & 9-3/4 & 9-3/4 & 9-3/4 & 9-3/4 \\
  %  9-3/4 & 3 & 2 & 3+11/4 & 2\\
   % 9-3/4  & 2 & 3 & 2 & 3+11/4\\
    %9-3/4  & 3+11/4 & 2 & 3 & 2\\
    %9-3/4  & 2 & 3+11/4 & 2 & 3
 %\end{bmatrix}.
    %\begin{split}
     %   \biRic(d_s, e_r)&=\biRic(d_s, f_r)=9-\frac{3}{4}, \quad r\in \{1,2\},\\
        %\biRic(e_1,e_2)&=\biRic(e_1,f_2)=\biRic(e_2,f_1)=\biRic(f_1,f_2)=2,\\
        %\biRic(e_1,f_1)&=\biRic(e_2,f_2)=3+\frac{11}{4}.
    %\end{split}
%\end{equation*}
Recall that
\begin{equation*}
        1\operatorname{-triRic}(x;y,z)=\Ric(x,x)+\biRic(y,z)-\sec(x,y)-\sec(x,z),
    \end{equation*}
    and note that $1\operatorname{-triRic}(x;y,z)$ is symmetric in $(x,y,z)$.
%Therefore, if $\Ric(x,x)-\sec(x,y)-\sec(x,z)>0$, we have $1\operatorname{-triRic}(x;y,z)>0$ automatically.

Direct calculation yields when $s^2\in (16/9,11/3]$,

\begin{align*}
    1\operatorname{-triRic}(v;\ove_1,e_2)&=1\operatorname{-triRic}(v;\ove_1,f_2) = 11-\frac{3}{2}s^2>0,\\
    1\operatorname{-triRic}(v;\ove_1,f_1)&=8+\frac{3}{4}s^2>0,\\
    1\operatorname{-triRic}(v;e_2,f_1)&=1\operatorname{-triRic}(v;f_1,f_2)=11-\frac{3}{2}s^2+(1-\frac{3}{4}s^2)\sin^2\alpha>0,\\
    1\operatorname{-triRic}(v;e_2,f_2)&=8+\frac{3}{4}s^2+(4-3s^2)\sin^2\alpha>0;    
\end{align*}

\begin{align*}
    %1\operatorname{-triRic}(\ove_1;v,e_2)&=1\operatorname{-triRic}(\ove_1;v,f_2)= 11-\frac{3}{2}s^2>0,\\
    %1\operatorname{-triRic}(\ove_1;v,f_1)&=8+\frac{3}{4}s^2>0\\
    1\operatorname{-triRic}(\ove_1;e_2,f_1)&=1\operatorname{-triRic}(\ove_1;f_1,f_2)=12-\frac{9}{4}s^2+(-1+\frac{3}{4}s^2)\sin^2\alpha>0,\\
    1\operatorname{-triRic}(\ove_1;e_2,f_2)&= 12-\frac{9}{4}s^2+(3s^2-4)\sin^2\alpha>0;
\end{align*}

\begin{align*}
    1\operatorname{-triRic}(e_2;f_1,f_2)&=12-\frac{9}{4}s^2>0.
\end{align*}
In conclusion, when $16/9< s^2<11/3$, $(\bS^5, g_{s})$ has positive $\Ric$, $\biRic$ and $1\operatorname{-triRic}$ curvatures with some negative sectional curvature.

Moreover, when $10/3< s^2$,
\begin{equation*}
    \sec(\ove_1,f_1)+\sec(v,f_1)+\sec(e_2,f_1)=-\frac{3}{2}s^2+5<0,
\end{equation*}
which implies that $\Ric_3<0$ somewhere. Thus, $(\bS^5, g_{s})$ with $s\in(10/3,11/3)$ is a family of desired ambient manifolds. By Theorem \ref{maintheorem1introduction}, such ambient manifolds do not admit any two-sided complete noncompact minimal hypersurfaces. 

To end the appendix, we would like to mention that when $s^2\in (\frac{11}{3},4)$, $\Ric_{g_s}\geq 3\left(2-\frac{s^2}{2}\right)>0$, while $\biRic_{g_s}(e_1,e_2)=11-3s^2<0$ and $\inf \sec_{g_s}=4-\frac{9}{4}s^2<0$. This means that positive Ric curvature ($C_1$ curvature) does not imply positive $\biRic$ ($C_2$ curvature) in general. It is also interesting to understand whether there exists a two-sided complete noncompact stable minimal hypersurface in such manifolds $(\bS^5,g_s)$.
%\begin{lemma}\label{volume2}
   % Let $(\bH^5,g)$ be the $5$-dimensional hyperbolic space. Let $M^4\hookrightarrow (X^5,g)$ be a simply connected complete (strongly) stable two-sided CMC hypersurface with
  %\begin{equation}\label{conditiononH}
 %   H^2_M> \frac{65}{4}.      \end{equation}
%    Let $\{E_l\}_{l\in\bN}$ be an end of $M$ adapted to $p\in M$ with length scale $L$, where $L$ is the constant from Lemma \ref{diameter}. Let $M_l:=E_l\cap B_{(l+1)L}(p)$. Then there is a constant $C>0$ such that
%    \begin{equation*}
%        \Vol_M(M_l)<C
%    \end{equation*}
%    for all $l$.
%\end{lemma}

%\begin{proof}
 %   Note that (\ref{conditiononH}) implies that $M^4$ satisfies the assumption in Lemma \ref{diameter}. The proof is totally similar to that of Lemma \ref{volume}.
%\end{proof}

\bibliography{mybib}
\bibliographystyle{alpha}

\end{document}